\newcommand{\RR}{\mathbb R}
\newcommand{\ZZ}{\mathbb Z}
\newcommand{\QQ}{\mathbb Q}
\newcommand{\CC}{\mathbb C}
\newcommand{\FF}{\mathbb F}
\newcommand{\z}{\zeta}%
\newcommand{\uu}{\mathbf u}%
\newcommand{\OO}{\mathcal O}
\newcommand{\cT}{\mathcal{T}}
\newcommand{\cH}{\mathcal{H}}
\newcommand{\cA}{\mathcal{A}}
\newcommand{\cK}{\mathcal{K}}
\newcommand{\cS}{\mathcal{S}}
\newcommand{\vep}{\varepsilon}
\newcommand{\ep}{\epsilon}
\DeclareMathOperator{\Int}{Int}
\newcommand{\fn}{\mathfrak{n}}
\newcommand{\fb}{\mathfrak{b}}
\newcommand{\fa}{\mathfrak{a}}
\newcommand{\fp}{\mathfrak{p}}
\newcommand{\fq}{\mathfrak{q}}
\newcommand{\fr}{\mathfrak{r}}
\newcommand{\group}[1]{\mathbf{#1}}
\DeclareMathOperator{\SL}{SL}
\DeclareMathOperator{\SO}{SO}
\DeclareMathOperator{\GL}{GL}
\DeclareMathOperator{\SU}{SU}
\DeclareMathOperator{\Mat}{Mat}
\DeclareMathOperator{\Sym}{Sym}
\newcommand{\bG}{\group{G}}
\DeclareMathOperator{\Herm}{Herm}
\DeclareMathOperator{\Eis}{Eis}
\newcommand{\del}{\partial}
\DeclareMathOperator{\Tr}{Tr}
\DeclareMathOperator{\Res}{Res}
\DeclareMathOperator{\Norm}{Norm}
\newcommand{\innprod}[2]{\langle #1, #2 \rangle}
\newcommand{\isomto}{\overset{\sim}{\rightarrow}}
\DeclareMathOperator*{\Inf}{Inf}
\DeclareMathOperator{\sgn}{sgn}
\newcommand{\Vor}{Voronoi\xspace}
\newcommand{\vect}[1]{\begin{bmatrix} #1 \end{bmatrix}}
\newcommand{\mat}[1]{\begin{bmatrix} #1 \end{bmatrix}}%
\newcommand{\bu}{\mathbf u}%
\newcommand{\ourprime}{12379}
\newcommand{\cusp}{\operatorname{cusp}}
\newcommand{\fatten}[1]{\phantom{x!}#1\phantom{x!}}
\newcommand{\St}{\mbox{St}}
\newcommand{\longriso}{\xrightarrow{\phantom{\alpha }\sim\phantom{\alpha }}}
\DeclareMathOperator{\Size}{Size}
\DeclareMathOperator{\IInt}{{\mathcal{I}}}
\theoremstyle{plain}
\newtheorem{thm}{Theorem}[section]
\newtheorem{prop}[thm]{Proposition}
\theoremstyle{definition}
\newtheorem{defn}[thm]{Definition}
\newtheorem*{remark}{Remark}
\newtheorem{ex}[thm]{Example}
\begin{document}

\title[Modular forms and elliptic curves]{Modular forms and elliptic
curves over the cubic field of discriminant $-23$}

\author{Paul E. Gunnells}

\address{Department of Mathematics and Statistics\\University of
Massachusetts\\Amherst, MA 01003-9305}

\email{gunnells@math.umass.edu}

\author{Dan Yasaki}

\address{Department of Mathematics and Statistics\\ 
University of North Carolina at Greensboro\\Greensboro, NC 27402-6170}

\email{d\_yasaki@uncg.edu}

\renewcommand{\setminus}{\smallsetminus}

\date{\today} 

\thanks{PG was partially supported by NSF grant DMS 1101640.  We thank
Avner Ash, Farshid Hajir, Alan Reid, and Siman Wong for helpful
conversations.  We also thank the referee for many useful suggestions.}

\keywords{Automorphic forms, cohomology of arithmetic groups, Hecke
operators, elliptic curves.}

\subjclass{Primary 11F75; Secondary 11F67, 11G05, 11Y99}

\begin{abstract}
Let $F$ be the cubic field of discriminant $-23$ and let
$\OO\subset F$ be its ring of integers.  By explicitly computing
cohomology of congruence subgroups of $\GL_{2} (\OO)$, we
computationally investigate modularity of elliptic curves over $F$.
\end{abstract}

\maketitle

\section{Introduction}\label{intro}

\subsection{} Let $F$ be a number field with ring of integers $\OO$,
and let $E$ be an elliptic curve defined over $F$.  According to a
general contemporary philosophy with roots in the work of Taniyama,
Shimura, and Weil, the curve $E$ should be \emph{modular}.  There are
many different perspectives about what this might mean, but at the
very least one expects there exists a cuspidal automorphic form $f$ on
$\GL_2$ that is a Hecke eigenform with integral Hecke eigenvalues, and
such that these eigenvalues are closely related to the number of
points on $E$ over various finite fields.  In particular, suppose
$\fn$ is the conductor of $E$.  Then one expects $f$ also to have
conductor $\fn$, and that there should be an identification of partial
$L$-functions $L^{S} (s,f) = L^{S} (s,E)$, where $S$ is a fixed finite
set of primes including all those dividing $\fn$.  This identification
is done through matching of Euler factors at good primes: if $\fp$ is
prime to $\fn$, then one wants
\begin{equation}\label{eq:pointcount}
|E (\FF_{\fp})| = \Norm (\fp) +1 -
a_{\fp},
\end{equation}
where $\FF_{\fp} = \OO/\fp$ is the residue field and $a_{\fp}\in
\ZZ$ is the $\fp$th Hecke eigenvalue.  This idea has been
computationally investigated by a variety of authors, including
Cremona and his students for $F$ complex quadratic \cite{cremona2,
crem.whitley,lingham, bygott}; Socrates--Whitehouse and
Demb{\'e}l{\'e} for $F$ real quadratic \cite{sw, dembele}; and the
current authors, in joint work with F. Hajir, for $F$ the $CM$ field
of fifth roots of unity \cite{zeta5}.

In this paper, we continue this computational work and study the
modularity of elliptic curves when $F$ is the complex cubic field of
discriminant $-23$.  This field has signature $(1,1)$ and is thus not
Galois (its Galois closure is the Hilbert class field of $\QQ
(\sqrt{-23})$).

The overall program is similar to that of \cite{zeta5}.  Namely,
instead of explicitly working with automorphic forms, we work with the
cohomology of the congruence subgroups $\Gamma_{0} (\fn) \subset
\GL_{2} (\OO)$.  By Franke's proof of Borel's conjecture
\cite{Fra}, one knows that this cohomology is built from certain
automorphic forms, and that there is a subspace of \emph{cuspidal
cohomology} that corresponds to certain cuspidal automorphic forms.
This cohomology can be computed using topological tools; in particular
the first step is carrying out a version of explicit reduction theory
due to Koecher \cite{koecher}.  This reduction theory is one
generalization of Voronoi's theory of perfect quadratic forms
\cite{voronoi1} to a much broader setting that includes quadratic
forms over both number fields of arbitrary signature and the
quaternions.  To the best of our knowledge, this is the first time
Koecher's work has been used for such computations.  Since we expect
that these techniques will be useful for later researchers, we take
some time to explain Koecher's work.

The second step is computing the cohomology and its decomposition
under the action of the Hecke operators.  Computing the cohomology is
straightforward, since the reduction theory provides us with an
explicit cell complex with an action by $\Gamma_{0} (\fn)$.  Computing
the Hecke operators, however, is much more involved.  Our technique is
to use the \emph{sharbly complex} and a variant of an algorithm
described by the first-named author for subgroups of $\SL_{4} (\ZZ)$
\cite{Gu}, and later developed by both of us in \cite{ants, zeta5} for
subgroups of $\GL_{2} (\OO)$, where $F$ is real quadratic and totally
complex quartic.  Although these settings seem to have little to do
with each other, they share the feature that the highest cohomological
degree where the cusp forms can contribute is exactly one less than
the virtual cohomological dimension.  Thus although the symmetric
spaces and underlying fields are completely different, computing the
Hecke operators reduces to a similar combinatorial problem.  The
technique we use is closely related to that of modular symbols
\cite{Man, Aru}, although the combinatorics are more complicated since
we need to work with a different cohomology group.  As in the papers
\cite{Gu, ants, zeta5}, we do not have a proof that our technique of
computing Hecke operators works, but the technique has always been
successful in practice.

With the first two steps completed, we find ourselves with a list of
Hecke eigenclasses, some of which apparently correspond to cuspforms,
and further some of which have eigenvalues that are rational integers.
The final step is to compile a list of elliptic curves over $F$ of
small conductors.  This is done in the obvious way: we enumerate
curves by searching over a box of Weierstrass equations and throwing
out duplicates.

After all three steps are done, we arrive at a list of cohomology
classes and a list of elliptic curves.  We found complete agreement
between these lists.  In particular:

\begin{itemize}
\item For each elliptic curve $E$ with norm conductor within the range
of our cohomology computations, we found a cuspidal cohomology class
with rational Hecke eigenvalues that matched the point counts for $E$
as in \eqref{eq:pointcount}, for every Hecke operator that we checked.
\item For each cuspidal cohomology class with rational Hecke
eigenvalues, we found a corresponding elliptic curve whose point
counts matched every eigenvalue we computed.
\end{itemize}

We now give a guide to the contents of the paper.  In \S\S
\ref{s:pd},\ref{s:qfnf} we recall Koecher's work on positivity
domains, and explain how one can find explicit fundamental domains for
$\GL_{n}$ over number fields acting on spaces of quadratic forms.  In
\S \ref{s:kp} we compute the Koecher polyhedron for the case under
consideration, namely $\GL_{2} (\OO)$ where $\OO$ is the ring of
integers in the cubic field of discriminant $-23$. Next, \S
\ref{s:cohomology} explains the complexes we use to compute the
cohomology of subgroups of $\GL_{2} (\OO)$ and the Hecke action.
After this, in \S \ref{sec:reduction} we give a brief overview of how
we compute the action of the Hecke operators, following \cite{zeta5,
ants}.  The following two sections \S\S
\ref{s:rl0},\ref{sec:normal_forms} give more details about what is
done differently in the current paper from the techniques in
\cite{ants, zeta5}.  Finally, in \S \ref{s:cr} we present our
computational data.

\section{Positivity domains}\label{s:pd}

In this section we review Koecher's study of positivity domains
\cite{koecher}, which generalizes Voronoi's reduction theory for
positive definite quadratic forms \cite{voronoi1}.  

Let $V$ be a finite dimensional vector space over $\RR$.  Let $\innprod{\phantom{a}}{\phantom{a}}
\colon V \times V \rightarrow \RR$ be a positive definite symmetric
bilinear form.  The form determines a norm $|\cdot|$ on $V$
in the usual way by $|v| = \sqrt{\innprod{v}{v}}$, and we give $V$ the
metric topology.  For any subset $C\subset V$, let $\overline C$ be
the closure, let $\Int (C)$ be the relative interior, and let
$\partial C = \overline C \smallsetminus \Int (C)$ be the boundary.

\begin{defn}\label{def:pd}
A subset $C \subset V$ is called a
\emph{positivity domain} if the following hold:
\begin{itemize}
\item $C$ is open and nonempty.
\item $\innprod{x}{y}>0$ for all $x,y\in C$.
\item For each $x\in V \smallsetminus C$ there is a nonzero $y\in
\overline C$ such that $\innprod{x}{y}\leq 0$. 
\end{itemize}
\end{defn}

One can show that a positivity domain $C$ is a convex cone in $V$.  In
other words, if $y\in C$ then $\lambda y\in C$ for all $\lambda > 0$,
and if $y,y'\in C$ then $y+y'\in C$.  Furthermore, $C$ contains no
line. 

Let $D\subset \overline C \smallsetminus \{0 \}$ be a nonempty
discrete subset.  For $y\in C$, let 
\[
\mu (y) = \Inf_{d\in D} \{\innprod{d}{y} \}.
\] 
Koecher proves that $\mu (y)>0$ and that the infimum is achieved only
 on a finite set of points, which we denote $M (y)$:
\[
M (y) = \{d\in D\mid \innprod{d}{y}=\mu (y) \}.
\]
We call $M (y)$ the set of \emph{minimal vectors} for $y$. 

\begin{defn}\label{def:perfect}
A point $y\in C$ is called \emph{perfect} if the linear span of its
minimal vectors $M (y)$ is all of $V$.
\end{defn}

Note that the set $M (y)$, as well as the notion of perfection of a
point in $C$, depend on the choice of the set $D$.
If $y$ is perfect, then so is $\lambda y$ for $\lambda > 0$, and
clearly $M (\lambda y) = M (y)$.  We let $\Phi = \Phi (D)$ be the set
of all perfect points $y$ with $\mu (y)=1$.

\begin{ex}\label{ex:classical}
We consider the classical case of this construction.  Let $V$ be the
space of symmetric $n\times n$ matrices over $\RR$, let
$\innprod{a}{b} = \Tr (ab)$, and let $C\subset V$ be the cone of
positive definite matrices.  Let $D\subset V$ be the set of points of
the form $vv^{t}$ for $v\in \ZZ^{n}$, where we regard elements of
$\ZZ^{n}$ as column vectors.  Then if $y\in C$, $d = vv^{t}\in D$, the
quantity $\innprod{d}{y}$ is easily seen to be the value $Q_{y} (v) =
v^{t}yv$, in other words the value of the positive definite quadratic
form $Q_{y}$ determined by $y$ on the integral vector $v$.  Koecher's
notion of perfect coincides with Voronoi's notion of a perfect
quadratic form: a positive definite quadratic form is perfect if it
can be recovered from the knowledge of its nonzero minimum and the set
of vectors on which the minimum is attained.
\end{ex}

Returning now to the general setting, 
Voronoi used perfect quadratic forms to provide an explicit reduction
theory for the cone of positive definite quadratic forms, and we want
the same for our positivity domain $C$.  Unfortunately not all sets
$D$ will accomplish this.  Thus we have the following definition:

\begin{defn}\label{def:admissible}
A nonempty discrete subset $D\subset \overline{C}\smallsetminus \{0
\}$ is said to be \emph{admissible} if for any sequence $\{y_{i} \}$
converging to a point in $\partial C$, we have $\lim \mu (y_{i}) = 0$.
\end{defn}

Note that the set $D$ from Example \ref{ex:classical} is admissible.
Suppose $\{y_{i} \}\rightarrow y\in \partial C$.  The real quadratic
form $Q_{y}$ associated to $y$ is degenerate, and thus there is a
nonzero subspace $W\subset \RR^{n}$ such that $Q_{y}$ restricted to
$W$ vanishes.  Since there exist integral vectors arbitrarily close to
$W$, we have $\lim \mu (y_{i}) = 0$.

Koecher proved that if $D$ is admissible, then $\Phi (D)$ is a
discrete subset of $C$, and provides a polyhedral decomposition of
$C$.  To explain what we mean, we must recall some notions from convex
geometry (a convenient reference is \cite[Chapter 1]{fulton}). Recall
that a \emph{polyhedral cone} in a real vector space $V$ is a subset
$\sigma$ of the form
\[
\sigma = \sigma (v_{1},\dotsc ,v_{p}) = \{\sum_{i=1}^{p}
\lambda_{i}v_{i}\mid \lambda_{i}\geq 0 \},
\]
where $v_{1},\dotsc ,v_{p}$ is a fixed set of vectors.  We say that
the $v_{1},\dotsc ,v_{p}$ \emph{span} $\sigma$.  The dimension of $\sigma$ is
the dimension of its linear span; if the dimension of $\sigma$ is $n$,
then we call $\sigma$ an \emph{n-cone}. If $\sigma$ is spanned by a
linearly independent subset, then $\sigma$ is called
\emph{simplicial}.  

Now given any $y\in \Phi (D)$, let $\sigma (y)$ be the cone
\[
\sigma (y) = \{\sum \lambda_{d}d\mid \lambda_{d}\geq 0, d\in M (y) \}.
\]
Koecher calls $\sigma (y)$ the \emph{perfect pyramid} of $y$.  We
remark that typically $\sigma (y)$ is not a simplicial cone.  

Let $\Sigma$ be the set of perfect pyramids and all their proper
faces, as $y$ ranges over all points in $\Phi (D)$.  Koecher proves
that for admissible $D$, the perfect pyramids have the following
properties:
\begin{enumerate}
\item Any compact subset of $C$ meets only finitely many perfect
pyramids.  
\item Two different perfect pyramids have no interior point in
common.\label{interior}
\item Given any perfect pyramid $\sigma$, there are only finitely many
perfect pyramids $\sigma '$ such that $\sigma \cap \sigma '$ contains
a point of $C$ (which, by item \eqref{interior}, must lie on the
boundaries of $\sigma$, $\sigma '$).\label{neighbors}
\item The intersection of any two perfect pyramids is a common face of
each.  \label{fan}
\item Let $\sigma (y)$ be a perfect pyramid and $F$ a codimension one
face of $\sigma (y)$.  If $F$ meets $C$, then there is another perfect
pyramid $\sigma (y')$ such that $\sigma (y)\cap \sigma (y') =
F$.\label{facetneighbor}  
\item We have $\bigcup_{\sigma \in \Sigma} \sigma \cap C = C$. \label{complete}
\end{enumerate}
In item \eqref{facetneighbor}, if a facet $F$ of a perfect pyramid is
contained in $\partial C$, we say that $F$ is a \emph{dead end}.  In
item \eqref{neighbors}, the pyramid $\sigma '$ is called a
\emph{neighbor} of $\sigma$.  Property \eqref{fan}, together with the
definition of $\Sigma$, implies that $\Sigma$ is a \emph{fan}
\cite[\S1.4]{fulton}.

\begin{defn}\label{def:koecherfan}
We call $\Sigma$ the \emph{Koecher fan}, and the cones in $\Sigma$
the \emph{Koecher cones}.
\end{defn}

Now we bring groups into the picture.  Let $G\subset \GL (V)$ be the
group of automorphisms of $C$.  Let $\Gamma \subset G$ be a discrete
subgroup such that $\Gamma D=D$.  Koecher proves that if $D$ is
admissible, then $\Gamma$ acts properly discontinuously on $C$
\cite[\S5.4]{koecher}.  Moreover, the Koecher fan gives an explicit
reduction theory for $\Gamma$ in the following sense:

\begin{enumerate}
\item [\textbf{(O1)}] There are finitely many $\Gamma$-orbits in $\Sigma $.
\item [\textbf{(O2)}] Every $y\in C$ is contained in a unique cone in $\Sigma$. 
\item [\textbf{(O3)}] Given any cone $\sigma \in \Sigma$ with $\sigma \cap C \not
=\emptyset$, the group $\{ \gamma \in \Gamma \mid \gamma \sigma =
\sigma \}$ is finite.
\end{enumerate}

Choose representatives  $\sigma_{1},\dotsc ,\sigma_{n}$ of the orbits
of $\Gamma$ in $\Sigma $, and let 
\[
\Omega = \bigcup \sigma_{i}\cap C.
\]
One would hope that $\Omega$ is a fundamental domain for the action of
$\Gamma $ on $C$, but unfortunately the properties
\textbf{(O1)}--\textbf{(O3)} do not imply this.  However $\Omega$ is
close to a fundamental domain: each of the $\sigma_i$ has at worst a
finite stabilizer subgroup in $\Gamma$.  If one wishes to refine
$\Omega$ to an exact reduction domain, as Koecher does, one must
account for these finite stabilizers, for instance by passing to the
barycentric subdivision of the perfect pyramids and then taking an
appropriate union of the resulting cones.  For our purposes this is
not necessary, since explicit knowledge of $\Omega$ and these
stabilizers suffices for cohomology computations
(cf.~\cite[\S4]{AGM}).

We conclude this section by presenting another viewpoint on the
perfect pyramids and the Koecher fan.  This perspective does not
appear in \cite{koecher}, but instead is motivated by the theory of
cores and co-cores in \cite[Chapter II]{amrt}.  

\begin{defn}\label{def:koecherpolyhedron}
Let $D$ be an admissible set in $\overline{C}$.  The \emph{Koecher
polyhedron} $\Pi$ is the convex hull in $\overline{C}$ of $D$.
\end{defn}

The connection between $\Pi$ and $\Sigma$ is given by the following proposition:

\begin{prop}\label{prop:sigmavspi}
Assume that no perfect pyramid in $\Sigma$ has a dead end.  Then the
nonzero cones in the fan $\Sigma$ are the cones on the faces of $\Pi$.
\end{prop}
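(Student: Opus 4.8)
The plan is to establish a bijection between the faces of the Koecher polyhedron $\Pi$ and the nonzero cones of the fan $\Sigma$, by showing that each construction recovers the other through the "cone on a face" operation. First I would recall that $\Pi = \text{conv}(D)$ lives in $\overline{C}$, and that since $D$ is admissible and discrete, $\Pi$ is a locally finite polyhedron whose recession directions all lie in $\overline{C}$; in particular $\Pi$ has no vertices in $\partial C$ other than possibly those forced by $D$, and every bounded face of $\Pi$ is the convex hull of a finite subset of $D$. The key observation is the duality between support functions and minimal vectors: for $y \in C$, the linear functional $d \mapsto \innprod{d}{y}$ attains its minimum $\mu(y)$ on $\Pi$ exactly at the face $\text{conv}(M(y))$, so the set of minimal vectors $M(y)$ is precisely the set of vertices of the face of $\Pi$ cut out by the supporting hyperplane $\{z : \innprod{z}{y} = \mu(y)\}$. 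This is the bridge between the two pictures.

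Next I would argue in two directions. Given a perfect point $y \in \Phi(D)$, the perfect pyramid $\sigma(y) = \Cone(M(y))$ is the cone on the face $\text{conv}(M(y))$ of $\Pi$; since $y$ is perfect, $M(y)$ spans $V$, so this face is a top-dimensional (hence facet, after checking codimension) face of $\Pi$, and its proper faces correspond under $\Cone(-)$ to the proper faces of $\sigma(y)$, which are exactly the lower-dimensional cones in $\Sigma$. Conversely, given a facet $\varphi$ of $\Pi$, one uses the fact that $\Pi$ is the convex hull of $D$ together with Koecher's admissibility to produce an inner normal direction $y$; the hypothesis that no perfect pyramid has a dead end is what guarantees that this normal $y$ actually lies in the open cone $C$ (a dead end would correspond to a facet of $\Pi$ whose supporting functional degenerates toward $\partial C$, i.e.\ a facet "at infinity"). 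Then $y$ is perfect with $M(y) = $ vertices of $\varphi$, and $\Cone(\varphi) = \sigma(y) \in \Sigma$. Finally, property \eqref{complete} that the $\sigma \in \Sigma$ cover $C$, together with \eqref{fan} that $\Sigma$ is a fan, matches the statement that the cones on faces of $\Pi$ tile $C$ — this consistency check closes the argument, showing the correspondence is exhaustive and compatible with the face relations on both sides.

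The main obstacle I expect is handling the behavior near the boundary $\partial C$ and making the "cone on a face" operation precise for a possibly unbounded polyhedron $\Pi$. Concretely: $\Pi$ need not be compact, its recession cone is $\overline{C}$, and its faces include unbounded ones; one must check that the unbounded faces do not contribute spurious cones, and that the no-dead-end hypothesis is exactly the condition under which $\Cone(-)$ sends the (bounded) proper faces of $\Pi$ onto the nonzero cones of $\Sigma$ without omission or repetition. I would isolate this in a lemma: \emph{under the no-dead-end assumption, every facet of $\Pi$ is bounded}, which follows because an unbounded facet would have a supporting functional vanishing in some direction of $\overline{C} \setminus \{0\}$, forcing a minimal vector configuration whose pyramid abuts $\partial C$. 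Granting that lemma, the remaining steps are the routine convex-geometry verifications that $\Cone(-)$ is an inclusion-reversing... rather, inclusion-\emph{preserving} bijection on face lattices intersected with $C$, and that it intertwines the face relation.
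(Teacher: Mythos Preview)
Your forward direction---showing that each perfect $y$ yields a facet of $\Pi$ via the supporting hyperplane $\{x:\innprod{x}{y}=1\}$---is correct and is exactly what the paper does.

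The converse is where you diverge, and where there is a genuine gap. You assert that the no-dead-end hypothesis ``guarantees that this normal $y$ actually lies in the open cone $C$,'' but you have not connected the hypothesis to that conclusion. A \emph{dead end} is, by definition, a codimension-one face of some \emph{perfect pyramid} that lies in $\partial C$; it is not a statement about arbitrary facets of $\Pi$ or their normals. To invoke the hypothesis for a facet $\varphi$ of $\Pi$ you would already need to know that the cone on $\varphi$ is a face of some perfect pyramid---which is precisely the bijection you are trying to establish. Your proposed lemma (``no dead ends implies every facet of $\Pi$ is bounded'') suffers from the same circularity: the sketched justification speaks of a ``minimal vector configuration whose pyramid abuts $\partial C$,'' but there is no pyramid attached to an arbitrary facet until you have shown its normal is perfect.

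The paper closes this gap by a connectivity argument rather than a direct analysis of normals. Having shown that each perfect $y$ gives a facet of $\Pi$, it applies Koecher's property \eqref{facetneighbor}: since no perfect pyramid has a dead end, every codimension-one face of $\sigma(y)$ meets $C$, hence is shared with another perfect pyramid $\sigma(y')$. Translated to $\Pi$, this says every facet of $\Pi$ adjacent to a facet coming from a perfect form itself comes from a perfect form. Connectivity of the facet graph then gives the bijection. This is the missing idea; once you have it, the discussion of recession cones, unbounded faces, and inclusion-preserving maps of face lattices becomes unnecessary.
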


\begin{proof}
Because $\Pi$ is a convex polyhedron, it suffices to see that the
perfect pyramids are the cones on the facets (maximal proper faces) of
$\Pi$.  So let $y$ be a perfect form with minimal vectors
$d_{1},\dotsc ,d_{n}$ and with perfect pyramid $\sigma (y)$.  The
equation $\innprod{x}{y}=1$ defines a hyperplane in $V$ that is a
supporting hyperplane for $\Pi$ (all other $d\in D$ satisfy
$\innprod{d}{y}>1$, and $\Pi$ is the convex hull of $D$).  Since the
$d_{i}$ span $V$, it follows that the convex hull of the $d_{i}$ is a
facet of $\Pi$.  Thus $\sigma (y)$ gives rise to a facet $F$ of $\Pi$,
and $\sigma (y)$ is the cone on $F$.  Since we assume no perfect
pyramids have dead ends, property \eqref{facetneighbor} on page
\pageref{facetneighbor} implies that the facets of $\Pi$ meeting $F$
in a codimension one face correspond to perfect forms by a similar
argument.  Thus the facets of $\Pi$ are in bijection with the perfect
pyramids.
\end{proof}

\section{Quadratic forms over number fields and reduction theory}\label{s:qfnf}

In this section we specialize the results of \S\ref{s:pd} to the
setting of primary concern to us.  This recapitulates \cite[\S
9]{koecher}.

Let $F$ be a number field of degree $d = r + 2s$ with $r$ real
embeddings and $s$ conjugate pairs of complex embeddings.  For each
pair of complex conjugate embeddings, choose and fix one.  We can then
identify the infinite places of $F$ with its real embeddings and our
choice of complex embeddings.

For each infinite place $v$ of $F$, let $V_{v}$ be the real vector
space of $n\times n$ real symmetric (respectively, of complex
Hermitian) matrices $\Sym_{n} (\RR)$ (resp., $\Herm_{n} (\CC)$) if $v$
is real (resp., complex).  Let $C_{v}$ be the corresponding cone of
positive definite (resp., positive Hermitian) forms.  Put $V =
\prod_{v}V_{v}$ and $C = \prod_{v}C_{v}$, where the products are taken
over the infinite places of $F$.  We equip $V$ with the inner product
\begin{equation}\label{eq:cv}
\langle x,y\rangle = \sum_{v} c_{v}\Tr (x_{v}y_{v}),
\end{equation}
where the sum is again taken over the infinite places of $F$, and
$c_v$ equals $1$ if $v$ is real and equals $2$ if $v$ is complex.

The group $G = \GL_{n} (\RR)^{r} \times \GL_{n} (\CC)^{s}$ acts on $V$
by
\[
(g\cdot y)_{v} = \begin{cases}
g_{v}y_{v}g_{v}^{t}&\text{$v$ real,}\\
g_{v}y_{v}\bar g_{v}^{t}&\text{$v$ complex.}\\
\end{cases}
\]
This action preserves $C$, and exhibits $G$ as the full automorphism
group of $C$.  Moreover, if $\bG$ is the reductive group $\Res_{F/\QQ}
\GL_{n}$, then we can identify the group of real points $\bG (\RR)$
with $G$.  In fact, we can identify the quotient $C/\RR_{>0}$ of $C$
by homotheties with the symmetric space $X = G/KA_{G}$, where
$K\subset G$ is the maximal compact subgroup and $A_{G}$ is the split
component (the connected component of a maximal $\QQ$-split torus in
the center of $\bG$).  Indeed, let $X_{n}$ (respectively $Y_{n}$) be
the symmetric space of noncompact type $\SL_{n} (\RR)/\SO (n)$
(resp. $\SL_{n} (\CC)/\SU (n)$).  Then we have 
\[
X \simeq (X_{n})^{r}\times (Y_{n})^{s}\times Z, 
\]
where $Z$ is a euclidean symmetric space of dimension
$r+s-1$.\footnote{In terms of the associated discrete groups, this
flat factor $Z$ accounts for the difference between $\SL_{n} (\OO)$
and $\GL_{n} (\OO)$; the former is infinite index in the latter.}  Now
we also have isomorphisms
\begin{equation}\label{eq:isos}
\Sym_{n} (\RR)/\RR_{>0} \isomto X_{n}, \quad \Herm_{n}
(\CC)/\RR_{>0} \isomto Y_{n}.
\end{equation}
The identification $C/\RR_{>0} \simeq X$ then follows, once one
rewrites \eqref{eq:isos} as $\Sym_{n} (\RR) \simeq X_{n}\times
\RR_{>0}$ and $\Herm_{n} (\CC) \simeq Y_{n}\times \RR_{>0}$.

Now we construct an admissible subset $D$ of $\overline {C}$.  Let
$\OO$ be the ring of integers of $F$.  The nonzero (column) vectors
$\OO^{n}\smallsetminus \{0 \}$ determine points in $V$ via
\begin{equation}\label{eq:star}
q \colon x \longmapsto (x_{v}x_{v}^{*}).
\end{equation}
Here $*$ denotes transpose if $v$ is real, and conjugate transpose if
$v $ is complex.  We have $q (x)\in \overline{C}$ for all $x\in
\OO^{n}$, and by analogy with Voronoi's construction it is natural to
consider the set $D$ obtained by applying $q$ to the nonzero points.
Indeed, we have the following basic result of Koecher
(cf.~\cite[Lemma 11]{koecher}):

\begin{prop}\label{prop:admissible}
The set 
\[
D = \bigl\{ q (x) \bigm| x\in \OO^{n}\smallsetminus \{0 \}\bigr\}
\]
is admissible.
\end{prop}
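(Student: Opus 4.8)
The plan is to verify the two defining conditions of admissibility directly for $D = \{q(x) \mid x \in \OO^n \setminus \{0\}\}$: first that $D$ is a nonempty discrete subset of $\overline C \setminus \{0\}$, and second the limiting condition that $\mu(y_i) \to 0$ whenever $\{y_i\}$ converges to a point of $\partial C$. The first part is the routine bookkeeping: $q(x) = (x_v x_v^*)_v$ lies in $\overline C$ since each $x_v x_v^*$ is positive semidefinite, it is nonzero because $x \neq 0$ forces some component $x_v \neq 0$, and discreteness follows because $\OO^n$ is a lattice in $\prod_v F_v^n$ (via the infinite embeddings) and $q$ is a proper map on the complement of the origin — the entries of $q(x)$ are (real and imaginary parts of) the products $x_{v,i}\overline{x_{v,j}}$, so bounding $\langle q(x), \beta\rangle$ for a fixed $\beta \in C$ bounds $\sum_v c_v |x_v|^2$ (a positive definite quadratic form in the lattice coordinates of $x$), leaving only finitely many $x$.

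The heart of the matter is the limiting condition. Let $\{y_i\} \subset C$ converge to $y \in \partial C$. Writing $y = (y_v)_v$, the fact that $y \in \partial C$ means at least one component $y_{v_0}$ is a positive semidefinite form that is not positive definite, so there is a nonzero vector $w \in \RR^n$ (if $v_0$ is real) or $\CC^n$ (if $v_0$ is complex) with $w^* y_{v_0} w = 0$. I would approximate $w$ by a point of $\OO^n$: choose $x \in \OO^n \setminus \{0\}$ whose image under the $v_0$-embedding is close to (a scalar multiple of) $w$, while its images under the other infinite places are controlled. Here one uses that the diagonal embedding $\OO^n \hookrightarrow \prod_v F_v^n$ has dense image after tensoring with $\RR$ in a suitable sense — more precisely, by the strong approximation / density of $\OO$ in $\prod_v F_v$, one can find $x$ with $x_{v_0}$ within $\delta$ of $w$ and every other $|x_v|$ bounded by a constant depending only on the field. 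Then
\[
\mu(y_i) \le \langle q(x), y_i \rangle = \sum_v c_v\, x_v^* y_{i,v} x_v,
\]
and passing to the limit, $\langle q(x), y \rangle = \sum_v c_v\, x_v^* y_v x_v$. The $v_0$ term is $c_{v_0}\, x_{v_0}^* y_{v_0} x_{v_0}$, which is within $O(\delta)$ of $c_{v_0}\, w^* y_{v_0} w = 0$ by continuity of the quadratic form; the remaining terms are bounded by $(\max_v \|y_v\|)$ times the bounded quantity $\sum_{v \neq v_0} c_v |x_v|^2$, but this is not yet small. To fix this I would instead scale: replace $x$ by choosing $x_{v_0}$ close to $\lambda w$ for large $\lambda$, so that after dividing through the $v_0$ term dominates and the ratio of off-diagonal mass to $|x_{v_0}|^2$ tends to $0$; equivalently, argue that $\mu(y) \le \inf_{x} \langle q(x), y\rangle$ and that this infimum over the lattice can be made arbitrarily small because integral points can be found arbitrarily close to the positive-dimensional isotropic subspace of the degenerate form $y_{v_0}$ (cf.~the argument in the classical case, Example~\ref{ex:classical}, following Definition~\ref{def:admissible}). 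Combining $\mu(y_i) \to \mu(y)$ (continuity of $\mu$, which Koecher establishes) with $\mu(y) = 0$ for $y \in \partial C$ gives the claim.

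The main obstacle is precisely the approximation step: making sure one can simultaneously place an integral vector near the isotropic direction of the degenerate component $y_{v_0}$ while keeping the contributions from the other infinite places under control, so that the total inner product $\langle q(x), y\rangle$ genuinely goes to $0$. In the classical one-place case this is immediate because there are no other places to worry about; over a general number field it is a genuine (though standard) Diophantine-approximation point, handled by the fact that $\OO^n$ is a full lattice in $\prod_v F_v^n \cong \RR^{dn}$, so its image is dense in any linear subspace up to the lattice scale, and one exploits homogeneity of the cone to rescale. Everything else — positive semidefiniteness, nonvanishing, discreteness — is formal. I would therefore structure the write-up as: (1) $D \subset \overline C \setminus \{0\}$ and $D$ discrete; (2) reduce the limiting condition to finding, for each $\varepsilon > 0$, an $x \in \OO^n \setminus \{0\}$ with $\langle q(x), y \rangle < \varepsilon$ for $y$ a fixed boundary point; (3) carry out the approximation using the lattice property and the homogeneity of $C$; (4) conclude via continuity of $\mu$.
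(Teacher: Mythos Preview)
The paper does not prove this proposition; it simply attributes the result to Koecher \cite[Lemma~11]{koecher}. Your outline is the right shape, and the reduction in your step~(2) --- for each boundary point $y$ and each $\varepsilon>0$, find $x\in\OO^n\setminus\{0\}$ with $\langle q(x),y\rangle<\varepsilon$ --- is exactly what is needed. But two steps do not go through as written.

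First, the scaling manoeuvre for controlling the places $v\neq v_0$ is a red herring. You propose taking $x_{v_0}$ close to $\lambda w$ for large $\lambda$ and then ``dividing through'', but we need $\langle q(x),y\rangle$ itself to be small, not a ratio, and the embeddings of $\OO^n$ into the various $F_v^n$ are rigidly linked: $\OO^n$ is a \emph{lattice}, not a dense subgroup, in $\prod_v F_v^n\cong\RR^{dn}$, so one cannot independently enlarge one archimedean component while bounding the others, and ``homogeneity of the cone'' does not help here. The clean fix is to work with the total real form $Q_y(\xi)=\sum_v c_v\,\xi_v^*y_v\xi_v$ on $\RR^{dn}$: it is positive semidefinite with nonzero kernel (containing $\ker y_{v_0}$ placed in the $v_0$-slot with zeros elsewhere), so the region $\{Q_y\le\varepsilon\}$ is a convex symmetric set of infinite volume, and Minkowski's theorem supplies a nonzero point of the full lattice $\OO^n$ inside it. Second, the appeal to ``continuity of $\mu$, which Koecher establishes'' is circular at the boundary: Koecher's continuity of $\mu$ holds on $C$, while its boundary behaviour is precisely what admissibility asserts. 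What you actually need, and what suffices, is that $\mu$, being an infimum of the continuous linear functionals $\langle d,\cdot\rangle$, is automatically upper semicontinuous; hence $\limsup\mu(y_i)\le\inf_{d\in D}\langle d,y\rangle=0$, and together with $\mu(y_i)>0$ this gives $\lim\mu(y_i)=0$.
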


The group $\Gamma =\GL_{n} (\OO)$ acts on $C$ and takes $D$ into
itself.  Thus we can find the Koecher fan $\Sigma $.  After passing to
the symmetric space $X$, we obtain a decomposition of $X$ into cells
with $\Gamma$-action that can then be used to compute the cohomology
of $\Gamma $ and its finite-index subgroups as in \cite{AGM, zeta5}
and other places.

\begin{remark}
One can think of the cone $C$ as being the space of real-valued
positive quadratic forms over $F$ in $n$-variables.  Specifically, if
$A\in C$ is a tuple $(A_{v})$, then $A$ determines a quadratic form
$Q_{A}$ on
$F^{n}$ by 
\[
Q_{A} (x) = \sum c_v x_v^* A_v x_v,
\]
where $c_{v}$ is defined in \eqref{eq:cv} and $*$ is defined as in
\eqref{eq:star}.  However, not every quadratic form of interest comes
from a matrix $A$ that is the image of a matrix from $F$ under the
embeddings.  For instance, in general the perfect forms will not come
from matrices over $F$ in this way.
\end{remark}

\section{The Koecher polyhedron and fan for the field of
discriminant $-23$}\label{s:kp} 

From now on, we set $n = 2$ and consider a particular mixed signature
cubic field, namely $F = \QQ [x] / (f(x))$, where $f (x)= x^3 - x^2 +
1$.  If we choose a root $t$ of $f$, then the field $F$ has ring of
integers $\OO = \ZZ[t]$, discriminant $-23$, and class number one.  We
remark that much of what we do can be extended to other mixed
signature cubic fields, but for this investigation we focus on $F$ to
honor its special place in the menagerie of complex
cubics.\footnote{For instance, it appears first in lists of cubic
fields ordered by the absolute value of their discriminants.  It
appears often in algebraic number theory courses since its Galois
closure is the Hilbert class field of $\QQ (\sqrt{-23})$.}

Since $F$ has signature $(1,1)$, the space of forms $V$ is the
product
\[V = \Sym_2(\RR) \times \Herm_2(\CC).\]  The quotient $X =
C/\RR_{>0}$ is $6$-dimensional.

We now present representatives for the $\Gamma = \GL_2(\OO)$ orbits of
perfect forms.  These were computed using the algorithm in
\cite{Gmod}, which is a recasting of \Vor's original algorithm to the
setting of self-adjoint homogeneous cones.  In particular, the
algorithm begins with an initial perfect form and then identifies the
neighbors of the corresponding perfect pyramid.  To find the initial
perfect form, we computed a large enough part of the Koecher
polyhedron $\Pi$ (Definition \ref{def:koecherpolyhedron}) to identify
a facet.  

In our computation, we found that there are nine $\Gamma$-orbits of
perfect forms, which thus give rise to nine $\Gamma$-orbits of perfect
pyramids.  Seven of these orbits consist of simplicial cones (each with
seven spanning vectors); the remaining two classes of cones are
spanned by eight and nine vectors.

\begin{thm}\label{thm:perfect}
A perfect binary form over $F$ has minimal vectors that are
$\GL_2(\OO)$-conjugate to exactly one of the following sets (each
vector $d$ in these lists is a representative of a pair $\pm d$ of
minimal vectors):
\begin{gather*}
\left\{ \vect{1\\0}, \vect{0\\1}, \vect{1\\1}, \vect{t^2 - t\\t^2},
\vect{-t\\-t},     \vect{1\\-t^2 + 1}, \vect{0\\-t} \right\}, \\
    \left\{ \vect{1\\0}, \vect{0\\1}, \vect{-t\\-t}, \vect{1\\-t^2 +
      1}, \vect{t^2 - t\\0},     \vect{0\\-t}, \vect{t^2 - t\\t^2}
    \right\},\\ 
    \left\{ \vect{1\\0}, \vect{0\\1}, \vect{1\\1}, \vect{t^2\\1},
    \vect{-t^2 + 1\\-t^2},     \vect{-t\\0}, \vect{-t\\-t},
    \vect{1\\-t^2 + 1}, \vect{0\\-t} \right\}, \\
    \left\{ \vect{1\\0}, \vect{0\\1}, \vect{1\\1}, \vect{t^2\\1},
    \vect{t^2 - t\\t^2},     \vect{1\\-t^2 + 1}, \vect{0\\-t}
    \right\},  \\
    \left\{ \vect{1\\0}, \vect{0\\1}, \vect{1\\1},
    \vect{-t\\-t}, \vect{t^2 - t\\0}, \vect{t^2 -    t\\t^2},
    \vect{0\\-t} \right\},\\ 
    \left\{ \vect{1\\0}, \vect{0\\1}, \vect{0\\-t}, \vect{t^2\\t^2},
    \vect{-t\\-t}, \vect{t^2 -     t\\0}, \vect{-1\\-t}, \vect{t\\t^2}
    \right\},\\ 
    \left\{ \vect{1\\0}, \vect{0\\1}, \vect{1\\1}, \vect{t^2 -
      t\\t^2}, \vect{-t\\-t},     \vect{t^2\\t}, \vect{1\\-t^2 + 1}
    \right\},\\ 
    \left\{ \vect{1\\0}, \vect{0\\1}, \vect{t^2\\t^2 - t},
    \vect{1\\t^2 - t}, \vect{-t\\0},     \vect{1\\t^2}, \vect{t^2 -
      t\\-t} \right\},\\ 
    \left\{ \vect{1\\0}, \vect{0\\1}, \vect{1\\1}, \vect{-t\\0},
    \vect{t^2 - t\\-t},     \vect{-t^2\\-t^2 + t}, \vect{t^2 - t\\t^2
      - t} \right\} .
\end{gather*}
\end{thm}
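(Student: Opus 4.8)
The plan is to verify directly that the nine listed sets are exactly the $\GL_2(\OO)$-orbit representatives of perfect pyramids, following the Voronoi-style algorithm recast to self-adjoint homogeneous cones as in \cite{Gmod}. First I would fix, once and for all, an explicit model for the ambient space: choose the real embedding $t\mapsto t_1\in\RR$ and the chosen complex embedding $t\mapsto t_2\in\CC$ of the root $t$ of $f(x)=x^3-x^2+1$, so that every vector $x\in\OO^2$ maps under $q$ in \eqref{eq:star} to a point $q(x)=(x_1x_1^{t},\,x_2x_2^{*})\in\Sym_2(\RR)\times\Herm_2(\CC)=V$, a space of real dimension $3+4=7$. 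The inner product on $V$ is the one in \eqref{eq:cv} with $c_v=1$ at the real place and $c_v=2$ at the complex place; for $d=q(x)$ and a form $A=(A_1,A_2)\in C$ one has $\innprod{d}{A}=x_1^{t}A_1x_1+2\,\Re(x_2^{*}A_2x_2)$, which is exactly the value $Q_A(x)$ of the real-valued quadratic form attached to $A$. With this dictionary in hand, ``perfect'' means: the $\leq 7$ minimal vectors $q(d_i)$ linearly span the $7$-dimensional $V$, so perfect pyramids spanned by $7$ vectors are simplicial and those spanned by $8$ or $9$ vectors are not, matching the stated count.

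Next I would run the neighbor-search (Voronoi) algorithm explicitly. One starts from a single perfect form, obtained as in \S\ref{s:kp} by computing enough of the Koecher polyhedron $\Pi$ (Definition \ref{def:koecherpolyhedron}) to detect a facet; by Proposition \ref{prop:sigmavspi} that facet is the convex hull of the minimal vectors of a perfect form, which we take to be the first set in the list. Given a perfect pyramid $\sigma(y)$ with facets $F_1,\dots,F_k$, for each facet $F_j$ that meets $C$ one finds, by property \eqref{facetneighbor}, the unique neighboring perfect pyramid $\sigma(y')$ with $\sigma(y)\cap\sigma(y')=F_j$: concretely one takes the supporting functional of $F_j$, moves it slightly ``outward'' along the direction transverse to $F_j$, and recomputes the minimal vectors, obtaining $y'$ and hence the new set $M(y')$. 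One then tests whether $\sigma(y')$ is $\GL_2(\OO)$-equivalent to a pyramid already found; this is the standard perfect-form isomorphism test, e.g.\ by comparing combinatorial invariants (number of spanning vectors, the $\ZZ$-lattice of linear relations among the $q(d_i)$, Gram data of the minimal vectors, automorphism group orders) and then searching for an actual $\gamma\in\GL_2(\OO)$ realizing the match. Iterating until no new orbits appear produces a finite list; property \textbf{(O1)} on page \pageref{facetneighbor} guarantees the process terminates, and properties \eqref{facetneighbor}, \eqref{complete} guarantee that the orbits so reached are \emph{all} of them (the ``graph of perfect pyramids modulo $\Gamma$'' is connected). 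Carrying this out yields precisely the nine sets displayed.

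Two bookkeeping points deserve attention. First, one must confirm that none of the nine perfect pyramids has a dead end, i.e.\ every codimension-one face that we traverse actually meets $C$ rather than lying in $\partial C$; this is needed both for the algorithm to close up and to apply Proposition \ref{prop:sigmavspi}. Since $F$ has class number one, $\GL_2(\OO)$ acts transitively on primitive vectors of $\OO^2$, so one may always normalize the first two minimal vectors of each pyramid to $\vect{1\\0}$ and $\vect{0\\1}$, as is done in the statement — this both simplifies the isomorphism tests and explains the recurring first two entries. Second, each displayed vector $d$ is only a representative of the pair $\pm d$, which is harmless because $q(d)=q(-d)$; the minimal-vector sets are genuinely the full $M(y)$ up to this sign identification.

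The main obstacle is the $\GL_2(\OO)$-equivalence testing together with certifying that the orbit graph has been exhausted. Each individual neighbor computation is a finite linear-algebra problem over $F$ (intersecting the cone $C$ with a hyperplane, minimizing a quadratic form over $\OO^2$), but deciding whether two $8$- or $9$-vector non-simplicial cones lie in the same $\GL_2(\OO)$-orbit requires a genuine search, and a missed equivalence would falsely inflate the count while a spurious one would collapse distinct orbits. In practice this is handled exactly as in the classical perfect-form computations \cite{voronoi1, Gmod}: one uses cheap invariants to rule out most pairs and an explicit backtracking search over candidate $\gamma$ for the rest, and one verifies completeness by checking that for every one of the nine representatives, every facet-neighbor is $\GL_2(\OO)$-equivalent to one of the nine. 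The remaining verifications — that each listed set does span $V$ over $\RR$ (hence is perfect), that the associated cones are pairwise $\GL_2(\OO)$-inequivalent, and that exactly seven are simplicial — are then routine finite checks, which we have carried out by computer.
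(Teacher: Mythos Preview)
Your proposal is correct and follows essentially the same approach as the paper: run the Voronoi-style neighbor algorithm of \cite{Gmod}, seeding it with an initial perfect form found by computing enough of the Koecher polyhedron $\Pi$ to detect a facet, and then traverse facet-neighbors modulo $\GL_2(\OO)$-equivalence until the orbit graph closes up. The paper states this more tersely (the theorem is presented as a computational result with the algorithm only sketched in the preceding paragraph), but your outline fleshes out exactly those steps; one small slip---``the $\leq 7$ minimal vectors'' should read ``$\geq 7$'', since spanning a $7$-dimensional $V$ requires at least seven.
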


Using the data in Theorem \ref{thm:perfect}, we can compute the
complete combinatorial structure of the fan $\Sigma$.  In particular
we can enumerate the $\GL_{2} (\OO)$-orbits of the lower-dimensional
cones of $\Sigma$.  The results are given in Table~\ref{tab:orbits}.
For later purposes, we record the following facts about $\Sigma$,
which represents the information we need for our purposes.
Propositions \ref{prop:cones} and \ref{prop:stabs} are proved by
direct computation. 

\begin{prop}\label{prop:cones} \mbox{}
\begin{enumerate}
\item All cones up to dimension $6$ in the Koecher fan are simplicial.
\item All $1$-cones lie in the boundary $\partial C$.
\item \label{2cones} One of the two orbits of $2$-cones lies in
$\partial C$; a representative has spanning vectors
\[
\left\{\vect{1\\0}, \vect{t\\0} \right\}.
\]
The other orbit, with representative spanned by 
\begin{equation}\label{eqn:interior2cone}
\left\{\vect{1\\0}, \vect{0\\1} \right\}.
\end{equation}
meets $C$.
\item There are ten orbits of $3$-cones, represented by  
\begin{gather*}
    \left\{ \vect{1 \\0}
    , \vect{0 \\1}
    , \vect{0 \\-t}
     \right\},
    \left\{ \vect{1 \\0}
    , \vect{0 \\1}
    , \vect{1 \\-t^2 + 1}
     \right\},
    \left\{ \vect{1 \\0}
    , \vect{0 \\1}
    , \vect{t - 1 \\-t + 1}
     \right\},\\
    \left\{ \vect{1 \\0}
    , \vect{0 \\1}
    , \vect{-t \\-t}
     \right\},
    \left\{ \vect{1 \\0}
    , \vect{0 \\1}
    , \vect{t \\1}
     \right\},
    \left\{ \vect{1 \\0}
    , \vect{0 \\1}
    , \vect{t^2 - t + 1 \\t - 1}
     \right\},\\
    \left\{ \vect{1 \\0}
    , \vect{0 \\1}
    , \vect{-t + 1 \\-t^2 + t}
     \right\},
    \left\{ \vect{1 \\0}
    , \vect{0 \\1}
    , \vect{t^2 \\t^2}
     \right\},
    \left\{ \vect{1 \\0}
    , \vect{0 \\1}
    , \vect{-t^2 + t \\t^2 - t + 1}
     \right\},\\
    \left\{ \vect{1 \\0}
    , \vect{0 \\1}
    , \vect{1 \\1}
     \right\}.
\end{gather*}
All $3$-cones meet the interior of $C$.
\item There are thirty-one orbits of $4$-cones.  Representatives have
spanning vectors given by the following list (each pair in this list
should be supplemented by the spanning vectors of the cone \eqref{eqn:interior2cone}).
\begin{gather*}
\left\{\vect{1\\-t^2 + 1},\vect{0\\-t}\right\},
\left\{\vect{-t\\-t},\vect{0\\-t}\right\},
\left\{\vect{-t + 1\\-t^2 + t},\vect{t^2 - t + 1\\t - 1}\right\},\\
\left\{\vect{-t\\-t},\vect{1\\-t^2 + 1}\right\},
\left\{\vect{t^2\\t^2},\vect{t\\1}\right\},
\left\{\vect{-t\\0},\vect{0\\t^2 - t}\right\},
\left\{\vect{t^2 - t\\t^2},\vect{1\\-t^2 + 1}\right\},\\
\left\{\vect{0\\-t},\vect{t - 1\\t^2 - t}\right\},
\left\{\vect{-1\\-t + 1},\vect{-t^2 + t\\t^2 - t + 1}\right\},
\left\{\vect{t^2 - t\\t^2},\vect{-t\\-t}\right\},\\
\left\{\vect{-1\\t^2 - t},\vect{t - 1\\t^2 - t}\right\},
\left\{\vect{-t\\0},\vect{-t^2 + t\\t^2 - t + 1}\right\},
\left\{\vect{-t\\0},\vect{-1\\-t + 1}\right\},\\
\left\{\vect{-1\\t^2 - t},\vect{0\\-t}\right\},
\left\{\vect{1\\1},\vect{0\\-t}\right\},
\left\{\vect{t^2 - t\\-t^2 + t},\vect{-t + 1\\t - 1}\right\},
\left\{\vect{1\\1},\vect{1\\-t^2 + 1}\right\},\\
\left\{\vect{t^2\\-t^2 + 1},\vect{t\\-t}\right\},
\left\{\vect{-t\\0},\vect{-t + 1\\t - 1}\right\},
\left\{\vect{-t\\0},\vect{t^2 - t\\-t^2 + t}\right\},\\
\left\{\vect{t^2 - t\\-1},\vect{-t + 1\\-t^2 + t}\right\},
\left\{\vect{1\\1},\vect{t^2 - t\\t^2}\right\},
\left\{\vect{-t\\t^2},\vect{t\\-t}\right\},
\left\{\vect{-t\\t^2},\vect{t^2\\-t^2 + 1}\right\},\\
\left\{\vect{-t^2 + t\\t^2 - t + 1},\vect{-t + 1\\-t^2 + t}\right\},
\left\{\vect{-t^2\\t^2 - t},\vect{-t^2 + t\\-t + 1}\right\},
\left\{\vect{-t^2 + 1\\1},\vect{-t\\-t^2 + 1}\right\},\\
\left\{\vect{1\\t^2},\vect{-t^2 + 1\\1}\right\},
\left\{\vect{t^2\\1},\vect{t^2 - t\\t^2}\right\},
\left\{\vect{-t\\0},\vect{t\\1}\right\},
\left\{\vect{t^2 - t\\-t^2 + t},\vect{t^2\\-t^2 + t}\right\}.
\end{gather*}
\end{enumerate}
\end{prop}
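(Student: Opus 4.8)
The proof is a finite computation that starts from the nine $\Gamma$-orbits of perfect pyramids given in Theorem~\ref{thm:perfect}; I will describe how to organize it. Throughout, the key elementary fact is that $G = \GL_2(\RR)\times\GL_2(\CC)$ acts on the generating vectors through $\gamma\cdot q(x) = q(\gamma x)$, so that a Koecher cone is determined by a finite set of $\pm$-classes of vectors in $\OO^2$ and $\GL_2(\OO)$ permutes such sets.

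First I record two structural facts. Since $V = \Sym_2(\RR)\times\Herm_2(\CC)$ has dimension $3+4 = 7$ and the minimal vectors of a perfect form span $V$ (Definition~\ref{def:perfect}), every perfect pyramid is a full-dimensional cone in $V$: seven of the nine orbits are spanned by $7$ linearly independent vectors and so are simplicial $7$-cones, while the other two are spanned by $8$ and $9$ vectors. By the definition of $\Sigma$, every $\Gamma$-orbit of cones of dimension at most $6$ is represented by a proper face of one of these nine pyramids. Second, for a cone $\sigma = \Cone\bigl(q(d_i)\mid i\in S\bigr)$, the component $q(d)_v = d_v d_v^{*}$ has rank $1$, so $\sigma$ meets the open cone $C$ if and only if there is a nonempty $S'\subseteq S$ for which $\{d_{i,v}\mid i\in S'\}$ spans the ambient $2$-dimensional space at every infinite place $v$; since the $d_i$ lie in $F^2$ and $n=2$, this happens exactly when $S$ contains two vectors that are linearly independent over $F$ (otherwise all $d_i$ lie on a common $F$-line and every positive combination of the $q(d_i)$ is singular in every factor). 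In particular a $1$-cone has a single generator, hence is singular in every factor and lies in $\partial C$; this is part~(ii).

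For part~(i) I observe that in a simplicial $7$-cone every subset of the generators spans a simplicial face, so the only cones whose simpliciality is in question are the proper faces of the two non-simplicial pyramids. For those I would compute the two face lattices explicitly — each face is the intersection of the pyramid with a supporting hyperplane $\{x\mid\innprod{x}{z}=0\}$, and a $k$-dimensional face is simplicial precisely when it has exactly $k$ extreme rays — and check that every proper face, necessarily of dimension at most $6$, has this property. This verification is the substantive content of part~(i), and determining the face lattices of the $8$- and $9$-generator cones correctly is the step most prone to error.

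It then remains to enumerate the $\GL_2(\OO)$-orbits of faces of dimensions $1$ through $4$ and to match them with the stated lists. I would list all such faces of the nine pyramids — subsets of generators for the seven simplicial ones, faces read off the computed face lattices for the other two — and sort them into orbits: two faces $\Cone(q(d_i))$ and $\Cone(q(e_j))$ lie in one orbit exactly when some $\gamma\in\GL_2(\OO)$ carries $\{\pm d_i\}$ onto $\{\pm e_j\}$, and such a $\gamma$ is pinned down over $F$ by its effect on two $F$-independent generators (with a separate, transitive action on $\PP^1(F)$ governing the degenerate $2$-cone), so only finitely many candidates need to be tested for membership in $\GL_2(\OO)$ and for matching the remaining rays. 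Carrying this out yields two orbits of $2$-cones, ten of $3$-cones, and thirty-one of $4$-cones, with the listed representatives; the complete orbit census is the content of Table~\ref{tab:orbits}. Finally, the incidences with $C$ follow from the criterion of the second paragraph: every listed $3$-cone and $4$-cone contains the $F$-independent pair \eqref{eqn:interior2cone} and so meets the interior of $C$; of the two $2$-cone orbits, $\{\vect{1\\0},\vect{0\\1}\}$ meets $C$ for the same reason, whereas $\vect{1\\0}$ and $\vect{t\\0}$ are $F$-proportional, so the $2$-cone they span lies in $\partial C$. The whole argument is organizational rather than conceptual — the one genuinely delicate input is the face-lattice computation for the two non-simplicial perfect pyramids — which is why we assert that the proposition is verified by direct computation.
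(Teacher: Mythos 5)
Your proposal is correct and is essentially the paper's argument: the paper states only that Propositions \ref{prop:cones} and \ref{prop:stabs} are ``proved by direct computation'' starting from the nine orbits of perfect pyramids in Theorem \ref{thm:perfect}, and your write-up is a faithful organization of exactly that computation (face enumeration of the nine pyramids, $\GL_2(\OO)$-orbit sorting, and the criterion that a cone meets $C$ precisely when its generators contain two $F$-linearly independent vectors). You correctly identify the face-lattice computation for the two non-simplicial pyramids as the only nontrivial step in part (i); nothing further is needed.
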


\begin{prop}\mbox{}\label{prop:stabs}
Let $\gamma$ and $\delta$ denote the matrices 
\[
\gamma =\mat{ 0 & 1 \\ 1 & 0 }, \quad \delta =\mat{ 0 & -1 \\ 1 & 0 }.
\]
\begin{enumerate}
\item 
The $2$-cone that meets $C$ has stabilizer of order $8$,
  isomorphic to $D_4$ generated by $\gamma$ and $\delta$.

\item The stabilizers of the $3$-cones are given in Table \ref{tab:3conestabs}.
\begin{table}
\begin{center}
\begin{tabular}{|c|c|c|}
\hline
Order  & Group & Generators\\ \hline
$4$ & $\ZZ/2\ZZ \times \ZZ/2\ZZ $ & $-I,
\gamma\delta
$\\ 
$4$ & $\ZZ/2\ZZ \times \ZZ/2\ZZ $ & $-I,
\mat{ 1 & -t^2 + 1 \\ 0 & -1 }$\\ 
$4$ & $\ZZ/2\ZZ \times \ZZ/2\ZZ $ & $-I,
\gamma$\\ 
$4$ & $\ZZ/2\ZZ \times \ZZ/2\ZZ $ & $-I,
\gamma$\\ 
$4$ & $\ZZ/2\ZZ \times \ZZ/2\ZZ $ & $-I,
\mat{ -1 & 0 \\ t & 1 }$\\ 
$2$ & $\ZZ/2\ZZ $ & $-I $\\ 
$2$ & $\ZZ/2\ZZ $ & $-I $\\ 
$4$ & $\ZZ/2\ZZ \times \ZZ/2\ZZ $ & $-I,
\gamma$\\  
$2$ & $\ZZ/2\ZZ $ & $-I$\\  
$12$ & $D_6 $ & $\mat{ 1 & 1 \\ -1 & 0 }, \gamma $\\ \hline
\end{tabular}
\end{center}
\medskip
\caption{The stabilizer groups of the $3$-cones in the Koecher fan
$\Sigma$.\label{tab:3conestabs} See Proposition \ref{prop:stabs} for notation.}
\end{table}

\item The stabilizers of the $4$-cones are given in Table \ref{tab:4conestabs}.
\begin{table}
\begin{center}
\begin{tabular}{|c|c|c|}
\hline
Order  & Group & Generators\\ 
\hline
$4$ & $\ZZ/2\ZZ \times \ZZ/2\ZZ$ & $-I, \mat{ 1 & -t^2 + 1 \\ 0 & -1 }$\\ 
$2$ & $\ZZ/2\ZZ$ & $-I $\\
$2$ & $\ZZ/2\ZZ$ & $-I $\\
$2$ & $\ZZ/2\ZZ$ & $-I $\\
$2$ & $\ZZ/2\ZZ$ & $-I $\\
$8$ & $D_4$ & $\gamma\delta, \mat{ 0 & -t^2 + t \\ -t & 0 }$\\
$2$ & $\ZZ/2\ZZ$ & $-I $\\
$2$ & $\ZZ/2\ZZ$ & $-I $\\
$2$ & $\ZZ/2\ZZ$ & $-I $\\
$2$ & $\ZZ/2\ZZ$ & $-I $\\
$2$ & $\ZZ/2\ZZ$ & $-I $\\ 
$2$ & $\ZZ/2\ZZ$ & $-I $\\ 
$2$ & $\ZZ/2\ZZ$ & $-I $\\ 
$4$ & $\ZZ/2\ZZ \times \ZZ/2\ZZ$ & $-I, \mat{ -1 & t^2 - t \\ 0 & 1 }$\\
$4$ & $\ZZ/2\ZZ \times \ZZ/2\ZZ$ & $-I, \mat{ 1 & 1 \\ 0 & -1 }$\\
$4$ & $\ZZ/2\ZZ \times \ZZ/2\ZZ$ & $-I, \gamma$\\ 
$2$ & $\ZZ/2\ZZ$ & $-I $\\ 
$2$ & $\ZZ/2\ZZ$ & $-I $\\ 
$2$ & $\ZZ/2\ZZ$ & $-I $\\ 
$2$ & $\ZZ/2\ZZ$ & $-I $\\ 
$2$ & $\ZZ/2\ZZ$ & $-I $\\ 
$2$ & $\ZZ/2\ZZ$ & $-I $\\ 
$4$ & $\ZZ/2\ZZ \times \ZZ/2\ZZ$ & $-I, \mat{ -t & t^2 \\ -t & t }$\\ 
$2$ & $\ZZ/2\ZZ$ & $-I $\\ 
$2$ & $\ZZ/2\ZZ$ & $-I $\\ 
$2$ & $\ZZ/2\ZZ$ & $-I $\\ 
$2$ & $\ZZ/2\ZZ$ & $-I $\\ 
$2$ & $\ZZ/2\ZZ$ & $-I $\\ 
$4$ & $\ZZ/4\ZZ$ & $\mat{ t^2 & 1 \\ -t^2 + t & -t^2 }$\\
$4$ & $\ZZ/2\ZZ \times \ZZ/2\ZZ$ & $-I, \mat{ -1 & 0 \\ t & 1 }$\\ 
$4$ & $\ZZ/2\ZZ \times \ZZ/2\ZZ$ & $-I, \mat{ t^2 - t & -t^2 + t \\ t^2 & -t^2 + t }$\\ \hline
\end{tabular}
\end{center}
\medskip
\caption{The stabilizer groups of the $4$-cones in the Koecher fan
$\Sigma$.\label{tab:4conestabs} See Proposition \ref{prop:stabs} for notation.}
\end{table}
\end{enumerate}
\end{prop}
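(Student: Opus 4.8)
The plan is to reduce each stabilizer to a finite search and then carry it out by machine. The key point is that $\Gamma = \GL_2(\OO)$ acts on the set $D$ of Proposition~\ref{prop:admissible} through the vectors producing it: if $\gamma$ has image $(\gamma_v)$ in $G$, then for $x \in \OO^2 \smallsetminus \{0\}$ we have $(\gamma_v x_v)(\gamma_v x_v)^{*} = \gamma_v(x_v x_v^{*})\gamma_v^{*}$, so $\gamma \cdot q(x) = q(\gamma x)$. Moreover $q$ is injective on $\OO^2 \smallsetminus \{0\}$ up to sign, and $q(x)$, $q(x')$ span the same ray in $V$ exactly when $x' = cx$ for some $c \in F^{\times}$ whose archimedean absolute values $|\sigma_v(c)|$ are all equal. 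By Proposition~\ref{prop:cones}(i) every cone in question is simplicial, so the extreme rays of a cone $\sigma$ with spanning set $\{x_1,\dots,x_k\} \subset \OO^2$ are precisely $\RR_{>0}q(x_1),\dots,\RR_{>0}q(x_k)$; hence $\gamma \in \Stab_\Gamma(\sigma)$ if and only if $\gamma$ permutes these rays, i.e.\ there is a permutation $\pi$ of $\{1,\dots,k\}$ with $q(\gamma x_i)$ a positive multiple of $q(x_{\pi(i)})$ for every $i$. The group $\Stab_\Gamma(\sigma)$ is finite by property~\textbf{(O3)}, and it always contains $-I$, since $q(-x) = q(x)$; this is why every order appearing in Tables~\ref{tab:3conestabs} and~\ref{tab:4conestabs} is even and why $-I$ shows up among the listed generators.

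To compute $\Stab_\Gamma(\sigma)$ explicitly, I would run over the orbit representatives furnished by Proposition~\ref{prop:cones}: the one $2$-cone meeting $C$, the ten $3$-cones, and the thirty-one $4$-cones (each of the last supplemented by the two spanning vectors of~\eqref{eqn:interior2cone}). Fix two spanning vectors $x_a, x_b$ of $\sigma$ that are linearly independent over $F$, so that an element of $\GL_2(F)$ is pinned down by its images of $x_a$ and $x_b$. There are only finitely many candidate such matrices: $\gamma x_a$ must be a scalar multiple of some spanning vector, with the scalar constrained by $\gamma x_a \in \OO^2$ and the condition on $|\sigma_v(\cdot)|$ above, and likewise for $\gamma x_b$, and the requirement $\det\gamma \in \OO^{\times}$ restricts the pair of scalars to a finite set. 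For each candidate, form $\gamma = [\gamma x_a \mid \gamma x_b]\,[x_a \mid x_b]^{-1}$ and retain it precisely when it is $\OO$-integral with unit determinant and permutes the full ray set $\{\RR_{>0}q(x_i)\}$. For the $2$-cone, spanned by the primitive vectors $(1,0)^{t}$ and $(0,1)^{t}$, the scalars must be units, and a unit of $\OO$ all of whose archimedean absolute values agree is a root of unity, hence $\pm1$, so there $\Stab_\Gamma$ is literally the group of signed permutation matrices. Collecting the survivors produces $\Stab_\Gamma(\sigma)$ as an explicit finite subgroup of $\GL_2(\OO)$.

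Finally, identify each group. For the $2$-cone one checks that the eight signed permutation matrices are all in $\GL_2(\OO)$, that $\gamma$ and $\delta$ are among them, and that $\delta^{2} = -I$ has order $4$ while $\gamma\delta\gamma^{-1} = \delta^{-1}$, so $\Stab_\Gamma \cong D_4$. For each $3$- and $4$-cone one reads off the order of the computed stabilizer and tests for elements of order $4$ or $6$ to distinguish $\ZZ/2\ZZ$, $\ZZ/2\ZZ \times \ZZ/2\ZZ$, $\ZZ/4\ZZ$, $D_4$, and $D_6$, and records a generating set; this is bookkeeping over the finitely many representatives. The main obstacle is one of volume and reliability rather than of ideas: there are thirty-one $4$-cones, and the computation takes as input the list of cone representatives output by the reduction algorithm of \cite{Gmod} underlying Proposition~\ref{prop:cones}, so one must first be confident that that list is complete and correctly normalized.
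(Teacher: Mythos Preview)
Your proposal is correct and matches the paper's approach: the paper simply states that Propositions~\ref{prop:cones} and~\ref{prop:stabs} ``are proved by direct computation,'' and what you have written is a careful outline of exactly such a computation, reducing each stabilizer to a finite enumeration of candidate matrices in $\GL_2(\OO)$ permuting the extreme rays. Your observation that the relevant scalars are forced to be roots of unity (hence $\pm 1$) once one combines integrality, the unit-determinant condition, and the equal-absolute-value constraint is the key finiteness step, and your treatment of the $2$-cone case is already a complete verification of part~(i).
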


\begin{table}
\begin{center}
\begin{tabular}{|c|c|c|c|c|c|c|c|}
\hline
$k$&\fatten1 & \fatten2 & \fatten3& \fatten4& \fatten5 &\fatten6 & \fatten{7}\\
\hline
$N_{k}$ &1& 2 & 10 & 31 & 47 & 35 & 9\\\hline
\end{tabular}
\medskip
\caption{The number $N_{k}$ of $\GL_2(\OO)$-orbits of cones of
  dimension $k$ in the Koecher fan $\Sigma$.\label{tab:orbits}}
\end{center}
\end{table}

\begin{remark}
A consequence of the computations in this section is that the Koecher
fan has no dead ends, in the sense of item \eqref{facetneighbor} on
page \pageref{facetneighbor}.  Thus the cones in the Koecher fan are
in bijection with the cones on the faces of the Koecher polyhedron, as
in Proposition \ref{prop:sigmavspi}.  We expect this is always true
for $\GL_{n}$ over number fields when $D$ is constructed as in
Proposition \ref{prop:admissible}.  We do not know if this is true for
general admissible sets $D$, although we expect not, since dead ends
do occur in other generalizations of Voronoi's theory \cite[\S3]{dsv}.
\end{remark}

\section{Cohomology, the Koecher complex, and the sharbly
complex}\label{s:cohomology}

We now turn to the cohomology spaces $H^{*} (\Gamma ; \CC)$, where
$\Gamma \subset \GL_{2} (\OO)$ is now a congruence subgroup.  This is
isomorphic to $H^{*} (\Gamma \backslash X; \CC)$.  Since $X$ is
$6$-dimensional, we have $H^{i} (\Gamma ; \CC) = 0$ unless $0\leq
i\leq 6$.  Since $\bG $ mod its radical has $\QQ$-rank 1, by the
Borel--Serre vanishing theorem \cite{BS} we have $H^{i} (\Gamma ; \CC)
= 0$ if $i=6$.  Thus the cohomological dimension $\nu$ of any $\Gamma$
is $5$.  One can show that the \emph{cuspidal cohomology} (the
part of the cohomology corresponding to cuspidal automorphic forms)
only occurs in degrees $2,3,4$, and general results imply that we only
need to compute one of these groups.  Thus we focus on $H^{4} (\Gamma
; \CC) $; as we shall see this is the easiest group for us to compute.

We now describe the techniques we use to compute cohomology and the
action of the Hecke operators.  Similar techniques were used in
\cite{zeta5, AGM}.  For more details about the complexes we use (in
the setting of $\GL_{n}/\QQ $), see \cite{AGM5}.

Let $\cT$ be the Tits building for $\GL_{n}/F$.  Thus $\cT$ is a
simplicial complex with $k$-simplices given by the proper flags in
$F^{n}$ of length $(k+1)$.  By the Solomon--Tits theorem, $\cT$ has
the homotopy type of a bouquet of $(n-2)$-spheres, and in particular
has reduced homology concentrated in dimension $n-2$.  One can
construct classes in $\tilde{H}_{n-2} (\cT)$ by taking the fundamental
classes of apartments: one chooses a basis $E = \{v_{1},\dotsc
,v_{n}\}$ of $F^{n}$ and considers all the possible flags that can be
constructed from $E$ by taking spans of permutations of subsets.  By
appropriately choosing signs one obtains a class $\langle v_{1},\dotsc
,v_{n}\rangle \in \tilde{H}_{n-2} (\cT)$.  It is known that such
classes span the homology.  We have an action of $\bG (\QQ)$, and by
definition, the \emph{Steinberg module} $\St_{n}$ is the $\bG
(\QQ)$-module $\tilde{H}_{n-2} (\cT )$.

According to the Borel--Serre duality theorem \cite{BS}, for any
arithmetic subgroup $\Gamma\subset \bG (\QQ)$ we have
\[
H^{\nu -k} (\Gamma; \CC  ) \longriso H_{k} (\Gamma ; \St_{n}\otimes \CC).
\]
Thus to compute the cohomology of $\Gamma$, we need to take a
resolution of the Steinberg module.  We work with two different
complexes; each has features that help us deal with the two sides of
our computational problem, namely computing $H^{*}$ as well as the
action of the Hecke operators.  The first, the Koecher complex, comes
from the geometry of the Koecher polyhedron.  It has the advantage
that it is finite mod $\Gamma$, but it does not admit an action of the
Hecke operators.  On the other hand the second, the sharbly complex,
does admit a Hecke action, but unfortunately it is not finite mod
$\Gamma$.

Recall that $\Sigma$ is the fan of Koecher cones in the closed cone
$\overline{C}$.  After we mod out by homotheties, the nonzero cones in
$\Sigma$ determine cells in the quotient $\overline{X} =
(\overline{C}\smallsetminus \{0 \})/\RR_{>0}$.  We let $K_{*}$ be the
oriented chain complex on these cells, and let $\partial K_{*}$ be the
subcomplex generated by the cells contained entirely in the boundary
$\partial X = \overline{X}\backslash X$.  The \emph{Koecher complex}
$\cK_{*}$ is the quotient complex $K_{*}/\partial K_{*}$.  We can
construct a map
\begin{equation}\label{eq:koechtostein}
\cK_{1} \longrightarrow \St_{2}
\end{equation}
as follows.  Let $\{e_{1},e_{2} \}$ be the standard basis of
$F^{2}$. By \eqref{2cones} of Proposition \ref{prop:cones}, nontrivial
generators of $\cK_{1}$ correspond to the images of the oriented
$2$-cones in the $\GL_{2} (\OO)$-orbit of the cone spanned by $q
(e_{1}), q (e_{2})$.  Orient this cone by taking the spanning points
in this order, and then map it to the class of the
corresponding apartment.  This leads to a diagram
\[
\cK_{*} \longrightarrow \St_2,
\]
which gives the desired resolution.

We now turn to the sharbly complex.  Recall that for any $x\in
\OO^{2}\smallsetminus \{0 \}$, we have constructed a point $q (x)\in
\overline{C}$ (see \eqref{eq:star}).  Write $x\sim y$ if $q (x)$ and $q (y)$
determine the same point in $\overline{X}$.  Let $\cS_k$, $k\geq 0$, be
the $\Gamma$-module $A_{k}/C_{k}$, where $A_{k}$ is the set of formal
$\CC$-linear sums of symbols $\uu =[x_1, \dots, x_{k+2}]$, where each
$x_i$ is in $\OO ^2\smallsetminus \{0 \}$, and $C_{k}$ is the submodule generated by
\begin{enumerate}
\item $[x_{\sigma(1)}, \dots, x_{\sigma(k+2)}]-\sgn(\sigma)[x_1,
\dots, x_{k+2}]$,
\item $[x, x_2, \cdots, x_{k+2}] - [y, y_2, \dotsc , y_{k+2}]$ if $x\sim
y$, and 
\item $\uu $ if $x_1, \cdots ,
x_{k+2}$ are contained in a hyperplane (we say $\uu $ is \emph{degenerate}).
\end{enumerate}  
We define a boundary map $\partial\colon \cS_{k+1} \to \cS_{k}$ by
\begin{equation}\label{eq:boundary}
\del [x_1, \cdots , x_{k+2}] =\sum_{i=1}^{k+2}(-1)^i[x_1, \cdots, \hat{x}_i,\dotsc  , x_{k+2}],
\end{equation}
where $\hat{x}_{i}$ means omit $x_{i}$.
The resulting complex $\cS_{*}$ is called the
\emph{sharbly complex}.  We have a map $\cS_{0}\rightarrow \St_{2}$
analogous to \eqref{eq:koechtostein}, and the sharbly complex provides
a resolution of $\St_{2}$. 

Recall that all Koecher cones of dimension $\leq 6$ are simplicial
(Proposition \ref{prop:cones}, (i))
Thus one can identify $\cK_{i}$ with a subgroup of $\cS_{i}$ for
$i\leq 5$.  This identification is compatible with the boundary maps,
so we can think of $\cK_{*}$ as being a subcomplex of $\cS_{*}$ in
these degrees.  Since our main focus is computing $H^{4}$, we will
work exclusively with Koecher/sharbly chains in degrees $0,1,2$, and
therefore we regard the corresponding Koecher chains as living in the
sharbly complex.  Propositions \ref{prop:cones}--\ref{prop:stabs} give
all the information we need for computing $H^{4}$.

To compute the Hecke operators, one proceeds as follows.  First one
computes the cohomology group $H^{4} (\Gamma \backslash X)$ using the
relevant part of $\cK_{*}$. Let $\xi$ be a cycle representing a class
in $H^{4}$.  We write $\xi$ as a finitely supported 1-sharbly cycle
$\xi = \sum n(\bu) \bu $, where ``cycle'' means that the boundary
vanishes modulo the action of $\Gamma$.  We can compute the Hecke
operator $T$ in the sharbly complex as 
\begin{equation}\label{eq:heckim}
T (\xi) = \sum n (\bu)\sum_{g} g\cdot\bu,
\end{equation}
where the inner sum is taken over a finite subset of $\bG (\QQ)$.  The
right hand side of \eqref{eq:heckim} will usually be a $1$-sharbly
cycle that does not come from the Koecher complex, and thus cannot
obviously be written in terms of a fixed basis of $H^{4}$.  Therefore
one needs an algorithm to move $T (\xi)$ back to a sum of cycles
coming from $\cK_{*}$.  How this is done is described in the next
section.

\begin{remark}
Since $\cK_{*}$ is not the chain complex of a simplicial complex, one
cannot directly regard it as a subcomplex of $\cS_{*}$.  However, it
is possible to canonically refine $\cK_{*}$ to a complex
$\tilde{\cK}_{*}$ that does sit naturally inside the sharbly complex:
one simply considers the simplicial complex on all simplices that
arise by subdividing the cells in $\cK_{*}$ without adding new
vertices.  One must also add new relations that encode when an
original Koecher cell is a union of simplices.  Since we do not need
this construction in our paper, we omit the details.
\end{remark}

\begin{section}{Reduction algorithm}\label{sec:reduction}

In this section we describe how Hecke images $T (\xi)$ as in
\eqref{eq:heckim} are rewritten as a sum of sharbly cycles supported
on Koecher cones.  The main ideas and steps already appear in
\cite{ants}, which studied the case of a $\GL_{2} (\OO)$, where $\OO$
is the ring of integers of a real quadratic field.  Here we focus on
the differences between that case and the current.  We begin with some
definitions.

Given a point $q (x)$, $x\in \OO^{2}\smallsetminus \{0 \}$, let $R
(x)$ be the unique point $y\in \OO^{2}\smallsetminus \{{0} \}$ such
that $y\sim x$ and $y$ is closest to the origin along the ray
$\RR_{\geq 0} q (x)$.  We call $R (x)$ the \emph{spanning point} of
$x$; for any sharbly $\bu = [x_{1},\dotsc ,x_{n}]$ we can speak of its
set of spanning points.  Any sharbly $\bu = [x_{1},\dotsc ,x_{n}]$
determines a closed cone $\sigma (\bu)$ in $\bar C$.  We call a
sharbly $\bu = [x_{1},\dotsc ,x_{n}]$ \emph{reduced} if its spanning
points are a subset of the spanning points of some fixed Koecher cone;
similarly we call a sharbly cycle reduced if each sharbly in its
support is reduced.  Note that $\bu$ reduced does not mean that
$\sigma (\bu)$ is the face of some perfect pyramid, and thus a reduced
sharbly cycle need not come from a Koecher cycle.  However, it is
clear that there are only finitely many reduced sharbly cycles modulo
$\Gamma$.  Moreover, it is not difficult to write a reduced sharbly
cycle in terms of Koecher cycles directly, so our main challenge is to
rewrite $T (\xi)$ in terms of reduced sharbly cycles.

We now introduce notions of size and reduction level help us gauge how
close a sharbly cycle is to being reduced.  For any $0$-sharbly $\bu =
[x_{1}, x_{2}]$, define the \emph{size} of $\bu $ by $\Size (\bu) =
|\Norm( \det (R (x_{1}), R (x_{2})))|$.  We extend this to general
sharblies by first defining the size of $\bu = [x_{1},\dotsc ,x_{n}]$
to be the maximum of the size of the sub $0$-sharblies
$[x_{i},x_{j}]$, and then to chains $\xi = \sum n(\bu)\bu $ by taking
the maximum size found over the support of $\xi$.

We now focus on $0$- and $1$-sharblies.  Given a $1$-sharbly $\bu =
[x_1,x_2,x_3]$, we call its sub $0$-sharblies \emph{edges}, and define
the \emph{reduction level} of $\bu$ to be the number of edges that are
not reduced.  According to Proposition~\ref{prop:cones} the only
$\GL_2(\OO)$-class of reduced 0-sharbly is $[e_1,e_2]$, and all we can often
detect nonzero reduction level by computing sizes, but we remark that
there are subtleties: for instance, there exist nonreduced 1-sharblies
of reduction level zero.  Similar phenomena occur in \cite{zeta5,
ants}, and reflect the infinite order units in $\OO$.

There are also edges with size 0 that are not reduced.  More
precisely, looking at the edges of the standard 1-sharblies, we see
that $[x, cx]$ is reduced only if the coordinates of $x$ generate the
ideal $\OO$ and $c \in \{\pm t, \pm t^{-1}\} = \{\pm \ep, \pm
\ep^{-1}\}$, where $\ep = -t$ is a generator of $\OO^\times$ mod
torsion.  Such $0$-sharblies are of course degenerate and are
eliminated in the defining relations of $\cS_{*}$, but when one wants
to write a $1$-sharbly cycle in terms of reduced cycles they must be
considered.  Again, this is not surprising since the same phenomena
appear in \cite{zeta5, ants}.  However, if a $1$-sharbly contains an
edge of the $[x, \pm x]$, then the 1-sharbly is degenerate and is
thrown away.

We now turn to the reduction algorithm.  The overall structure
proceeds as described in \cite{ants}, and we refer to there for more
details.  For each $1$-sharbly $\bu$ in the support of a Hecke image,
we $\Gamma$-equivariantly choose a collection of \emph{reducing
points} for the nonreduced edges of $\bu$ (see \cite[\S 3.3]{ants}).
These points, together with the original spanning points of $\bu$, are
assembled into a new $1$-sharbly chain $\xi '$, following the cases
described in \cite[\S 3.5]{ants}.  The process is repeated until the
$1$-sharbly chain is reduced.

In the current work, there are two new features to this algorithm:
\begin{enumerate}
\item \label{item1} The case of reduction level 0 is handled slightly
differently than in \cite{ants,zeta5}.\footnote{We also remark that for reduction level 1, only
the second case of \cite[\S 3.5 (III)]{ants} is needed.}  
\item \label{item2} A new technique is used to choose reducing points $\Gamma$-equivariantly.
\end{enumerate}
For item \eqref{item1} see the next section; item \eqref{item2} is discussed in
\S\ref{sec:normal_forms}.

\section{Reduction level 0}\label{s:rl0} We consider $1$-sharblies
with all edges reduced, but that are not themselves reduced.
According to Proposition~\ref{prop:cones}, there are two $\GL_2(\OO)$-orbits of
2-cones.  One is degenerate, and the other corresponds to a 0-sharbly
of size 1.  It follows that the 1-sharblies with all edges reduced
must have edges of determinant $0$ or $\epsilon^k$, where $\epsilon$
is an infinite generator of the unit group $\OO^\times$, and $k\in \ZZ
$ has large absolute value.  More precisely, write
\[
\OO^\times \simeq \langle -1 \rangle \times
\langle  \epsilon \rangle.
\]  Then up to 
$\GL_2(\OO)$-equivalence, we need only consider $1$-sharblies with
spanning points $e_1,$ $e_2$, and $v = \vect{a\\b}$, where
\begin{itemize}
\item $a =
(-1)^{f_a} \epsilon^{n_a}$ and $b = (-1)^{f_b} \epsilon^{n_b}$,
\item at
least one of $|n_a|$, $|n_b|$ is strictly greater than 1, and 
\item both $n_{a}, n_{b}$ are non-zero.
\end{itemize}

We now describe how to write $[e_{1}, e_{2}, v]$ in terms of reduced
1-sharblies. We do this by replacing $[e_{1}, e_{2}, v]$ by a sum of
1-sharblies that are $\GL_2(\OO)$-equivalent to 3-cones listed in
Proposition~\ref{prop:cones}. 
First introduce new vertices $v_a =
\vect{a\\0}$, $v_b = \vect{0\\b}$ and edges 
$[v_a, v]$,  
$[v_a, e_1]$,  
$[v_a, e_2]$,  
$[v_b, v_a]$, 
$[v_b, v]$, and 
$[v_b, e_2]$
 (cf.~the left of 
Figure~\ref{fig:red}.)

Now we treat each region $A_i$ separately, depending on a variety of cases.
Given $z = (-1)^f \epsilon^n \in \OO^\times$, let $\IInt(a) \subset \OO^\times$
denote the list  
\[\IInt(z) = [z_0, z_1, z_2, \dotsc, z_{|n|}],\]
where $z_i = (-1)^f \epsilon^i$ if $n \geq 0$, and  $z_i = (-1)^f
\epsilon^{-i}$ if $n < 0$.  For example, if $z = -\epsilon^3$, then
$\IInt(z)$ is the list $[-1, -\epsilon, -\epsilon^2, -\epsilon^3]$.

By construction, the region $A_0$ is already reduced.  In fact,
we have 
\[[v, v_a, v_b] = \gamma \cdot [e_1 + e_2, e_1, e_2], \]
where $\gamma =  \mat{a & 0\\0 & b} \in \GL_2(\OO)$. 

For the regions $A_{1}$, $A_{2}$, along the edge $[v_{a}, e_{1}]$ we
introduce vertices 
\[\varepsilon_a = \biggl\{\vect{z\\0}\ \biggm |\ z \in \IInt(a) \biggr\} 
\]
Note that $\#\IInt(a) \geq 2$.  If $\#\IInt(a) = 2$, then no new
vertices are added.  Otherwise, let $\varepsilon_a^0$ be $\varepsilon_a$
with the first and last vector removed.  Construct edges
\begin{gather*}
\{[w,v]\ \mid\ w \in \varepsilon_a^0\}, \quad \{[w,e_2]\ \mid\ w \in
\varepsilon_a^0\}, \quad \text{and} \\
\{[(\varepsilon_a)_i,(\varepsilon_a)_{i+1}]\ \mid \ i = 1, \dotsc,
\#\varepsilon_a - 1\}.
\end{gather*}
In $A_1$ this creates the $1$-sharblies $[v, (\vep_a)_i,
(\vep_a)_{i+1}]$.  These are reduced.  Indeed, if $n_a \geq 0$, then
\[[v, (\vep_a)_i, (\vep_a)_{i+1}] = \gamma \cdot [e_1, e_2, \epsilon
  e_2],\]
where $\gamma = \mat{0 &
    b^{-1}\\ \epsilon^{-i} & ab^{-1} \epsilon^{-i}} \in \GL_2(\OO)$.
If $n_a < 0$, then 
\[[v, (\vep_a)_i, (\vep_a)_{i+1}] = \gamma \cdot [e_1, e_2, \epsilon^{-1}
  e_2],\] where $\gamma = \mat{0 & b^{-1}\\ \epsilon^{i} & ab^{-1}
\epsilon^{i}} \in \GL_2(\OO)$.  A similar argument applies to the new
$1$-sharblies in $A_{2}$, which are $[e_2,(\vep_a)_{i+1}, (\vep_a)_{i}
]$; thus they are also reduced.

The regions $A_{3}$, $A_{4}$ are handled in the same manner.  Along
the edge $[e_2, v_b]$ we introduce the vertices
\[\varepsilon_b = \biggl\{\vect{0\\z}\ \biggm|\ z \in \IInt(b) \biggr\}.\]
As before $\#\IInt(b) \geq 2$, and we put $\varepsilon_b^0$ to be  $\varepsilon_b$
with the first and last vector removed.  Add edges 
\begin{gather*}
\{[w,v]\ \mid \ w \in \varepsilon_b^0\}, \quad \{[w,v_a]\ \mid\ w \in
\varepsilon_b^0\}, \quad \text{and} \\
\{[(\varepsilon_b)_i,(\varepsilon_b)_{i+1}]\ \mid\ i = 1, \dotsc ,
\#\varepsilon_b - 1\}.
\end{gather*}
The 1-sharblies added in region $A_3$ are $[v_a, (\vep_b)_{i},
(\vep_b)_{i+1}]$; those added in region $A_4$ are $[v, (\vep_b)_{i+1},
(\vep_b)_{i}]$.  Both types are reduced as above.  The result, after
completing all steps in this section, is the chain of $1$-sharblies
depicted in the right of Figure~\ref{fig:red}.

\begin{figure}
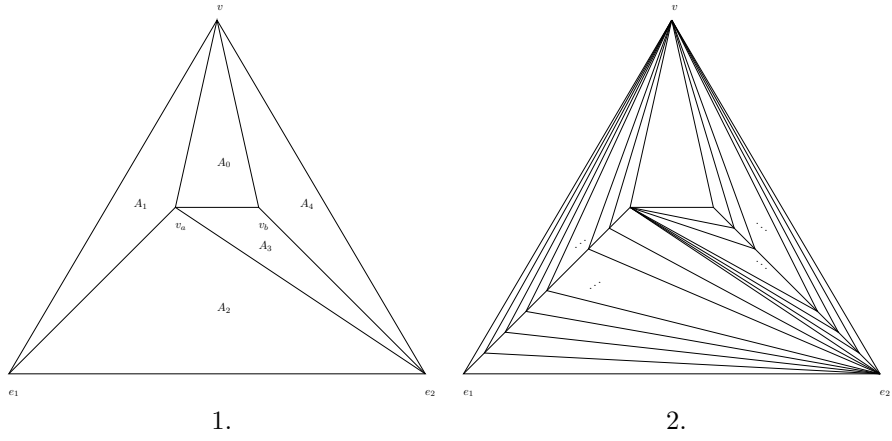

\begin{tabular}{c c}
\includegraphics[width = 0.45\textwidth]{type0_step1.mps} & 
\includegraphics[width = 0.45\textwidth]{type0_step2.mps}\\
1. &2.
\end{tabular}
\caption{Reduction of Type-$0$ $1$-sharbly.} \label{fig:red}
\end{figure}
\end{section}

\begin{section}{Normal forms}\label{sec:normal_forms}

An important aspect of the reduction algorithm is choosing reducing
points $\Gamma$-equivariantly.  This means the following.  Suppose
that $\bu$ and $\bu '$ are two $1$-sharblies that appear in a
$1$-sharbly cycle $\xi$ mod $\Gamma$ with coefficient $1$, and that an
edge $[x_{1},x_{2}]$ of $\bu$ cancels an edge $[x'_{1}, x'_{2}]$ of
$\bu '$ when $\partial \xi$ is taken mod $\Gamma$; that is, there
exists some $g\in \GL_{2}(\OO)$ such that $g\cdot [x_{1}, x_{2}] = -
[x'_{1}, x'_{2}]$.  Suppose further that these two edges are not
reduced.  Then when executing the reduction algorithm we begin by
choosing reducing points $w, w'$ for them, and we require that $g\cdot
w = w'$.  If this $\Gamma$-equivariance is not respected, then the
output of the algorithm will not be a cycle mod $\Gamma$.

In \cite[\S\S 3.2--3.4]{ants}, we presented a technique to do this.
First we encode a $1$-sharbly cycle mod $\Gamma$ as a collection of
$1$-sharblies with extra data attached to their edges, called
\emph{lift matrices}.  These are $2\times 2$ matrices over $\OO$ such
that if two edges are $\GL_{2} (\OO)$-equivalent, then their lift
matrices are in the same $\GL_{2} (\OO)$-orbit.  Then when computing a
reducing point for an edge, we first compute a unique representative
of the $\GL_{2} (\OO)$-orbit of its lift matrix.  This 
representative is then fed into the reducing point algorithm.

Thus we must explain how to compute a normal form for any lift matrix.
We use the technique of pseudo-matrices and their associated Hermite
normal form \cite{cohen-ntbook}.

\begin{defn}
  A \emph{pseudo-matrix} is a pair $(I, A)$, where $A = (a_{j,l})$ is an $m \times
  r$ matrix with entries in $F$, and $I = (\fa_i)$ is a sequence of
  fractional ideals called \emph{coefficient ideals}.
\end{defn}

\begin{defn}
  A pseudo-matrix $(I,A)$ is in \emph{Hermite normal form} if there
  are $s \leq m$, $1 \leq i_1 < \dots < i_s$ such that 
  for $1 \leq l < i_j$, we have $a_{j,l} = 0$, $a_{j,i_j} = 1$, and
  $a_{j,l}$ is reduced  modulo $\fa_j \fa_l^{-1}$, and $a_{j,l} = 0$
  for $j > s$.  
\end{defn}

We specialize to our case $m = k = 2$.  For a matrix $A$, let $A_i$
denote the $i${th} row.  For a matrix $A \in \Mat_2(\OO)$, let $M_A$
denote the $\OO$-module generated by $\{A_1,A_2\}$.  A pseudo-matrix
consists of a triple $[I_1,I_2,A]$, where $I_i \subseteq \OO$ are
ideals, and $A \in \Mat_2(\OO)$.  For a pseudo-matrix $\cA =
[I_1,I_2,A]$, let $M_\cA$ denote the $\OO$-module
\[M_\cA = \{a_1A_1 + a_2A_2 \mid a_i \in I_i\}.\]
Thus if $A \in \Mat_2(\OO)$ and $\cA = [\OO,\OO,A]$, then $M_A =
M_\cA$.

Given a pseudo-matrix $\cA = [I_1,I_2,A]$, Magma's \verb+HermiteForm+
function returns a pair $\mathcal{H}, T$, where 
$\mathcal{H}$ is pseudo-matrix $[J_1,J_2,H]$ in Hermite normal form, and $T
\in \GL_2(F)$ satisfies
\[M_\cH = M_\cA \quad \text{and} \quad H = TA.\]

Note that while $A$ and $H$ have entries in $\OO$, the entries of $T$
lie in $F$ in general.  Note, however, that $J_i\cdot T_i \subseteq
\OO^2$ for $i = 1,2$.  Since $F$ is class number one, we can pick
generators $g_i \in \OO$ such that $J_i = g_i \OO$.  Then $g_i T_i \in
\OO^2$, and thus $\tilde{T} = \mat{g_1 T_1\\g_2 T_2} \in \GL_2(\OO)$.
Let $\tilde{H} = \mat{g_1 H_1\\g_2 H_2}$.  If $\cA$ comes from a
matrix $A$ so that $\cA = [\OO,\OO,A]$, then we have $\tilde{H} =
\tilde{T}A$.

If we wish to use the theory of Hermite normal forms to produce normal
forms for matrices using the remark above, we need a section to the
map
\[\OO \to \OO/\OO^\times.\]
Let $a \in \OO$ be the element for which we wish to produce a
representative modulo units.  First construct the ideal $\fa =
a\OO$. Then compute a $\ZZ$-basis for $\fa$, and compute the Hermite
normal form (over $\ZZ$) of the matrix of this basis.  This will
produce a sequence $L \subset \OO$ which generates $\fa$.   In Magma,
we create another ideal $\fb$ generated by 
$L$.  Note that 
\begin{enumerate}
\item $\fb = \fa$.
\item The generators $\fb$ only depend on the ideal $\fa$, not the
  original choice of generator $a$.
\end{enumerate}
Since $F$ is class number one, Magma will produce a generator $a_0$
for $\fb$, which only depends on the generators $L$.  This $a_0$ is
the representative for $a$ modulo units.

This procedure, combined with Hermite normal form for pseudo matrices
and the remark above allow us to, given $A \in \Mat_2(\OO)$, output a
Hermite normal form $\tilde{H}$ and transformation matrix $\tilde{T}$
such that 
\begin{enumerate}
\item $\tilde{H} = \tilde{T}A$.
\item If $A' = gA$ for some $g \in \GL_2(\OO)$, then $\tilde{H'} = \tilde{H}$.
\end{enumerate}

This completes the computation of normal forms, but there is still one
implementation detail of interest.  Recall that we really only need
the normal form to ensure all reducing points are chosen
$\Gamma$-equivariantly.  A key step in computing a reducing point is
computing the cone containing the barycenter of the line through
$q(v_1)$ and $q(v_2)$, where $v_i$ are the columns of the lift matrix
(cf.~\cite[\S 3.3]{ants}).  By abuse of notation, we will call this
the cone containing the lift matrix.

It turns out that for a typical lift matrix $A$ arising in our
computations, the cone containing $A$ is much closer to the
``standard" cones (i.e., cones appearing as faces of the perfect
pyramids from Theorem \ref{thm:perfect}) than the cone containing the
Hermite normal form $\tilde{H}$.  This means that if we use Hermite
normal form as the input to our code that computes reducing points,
then our code will waste considerable time trying to compute the cone
containing $\tilde{H}$.

We address this issue as follows by caching normal forms as we compute
them.  Given a matrix $A \in \Mat_2(\OO)$, we compute the pseudo
matrix Hermite normal form $\cH$.  If $\cH$ shows up on our master
list, we return the associated normal form $A_0$.  If $\cH$ is not on
the master list, we set $A_0 = A$ to be the normal form associated to
$\cH$.  In practice we go ahead and compute the reducing point for $A$
as well before adding the data to the master list.
\end{section}

\section{Computational results}\label{s:cr}

We conclude by presenting our computational results, from both the
cohomology and elliptic curve sides.  The techniques are very similar
to that of \cite{zeta5}, so we will be brief. As before our programs
were implemented in Magma \cite{magma}.  We remark that, as in
\cite{zeta5}, to avoid floating-point precision problems we did not
use the complex numbers $\CC$ as coefficients for $H^{4}$, but instead
computed cohomology with coefficients in the large finite field
$\FF_{\ourprime}$.  We expect that the Betti numbers we report coincide
with those one would compute for the group cohomology with
$\CC$-coefficients.

As in \cite{zeta5}, we began by computing the dimension of $H^{4}$
for a large range of level.  To determine if $H^{4}_{\cusp} \not = 0$,
we experimentally determined the dimensions of the
subspace $H^{4}_{\Eis}$ spanned by \emph{Eisenstein cohomology
classes} \cite{harder.gl2}.  Such classes are closely related to
Eisenstein series.  In particular the eigenvalue of $T_{\fq}$ on these
classes equals $\Norm (\fq) +1$.  We expect that for a given level
$\fn$, the dimension of the Eisenstein cohomology space depends only on
the factorization type of $\fn$.  Thus initially we used some Hecke
operators applied to cohomology spaces of small level norm to compute
the expected Eisenstein dimension for small levels with different
factorization types.  The result can be found in
Table~\ref{tab:eisenstein}.  After compiling this table, we were able
to predict which levels had cohomology in excess of the Eisenstein
subspace, and thus gave candidates for cuspidal classes.

Next we computed the Hecke operators. Our primary focus was to find
eigenclasses with rational eigenvalues, and in fact this was almost
always the case: for all levels except level norm $529 = 23^{2}$, the
Hecke eigenvalues were rational.  Altogether we computed the
cohomology at $308$ different levels, which includes all ideals with
level norm $\leq 835$.

We now turn to the elliptic curve side.  We compiled a list of
elliptic curves over $F$ of small norm conductor simply by searching
over a box of Weierstrass equations.  More precisely, for a positive
integer $B$, let
$$S_B = \bigr\{ c_0 + c_1 t +c_2
t^2 \bigm| |c_i| \leq B, 0\leq i \leq 2 \bigr\},$$ be a boxed
grid of size $(2B+1)^3$ inside the lattice of algebraic integers in $F$,
centered at the origin.  
We considered
equations of the form 
\[
E: y^2 +
a_1 xy + a_3 = x^3 + a_2 x^{2} + a_4 x + a_6, \text{with $a_1, a_2, a_3, a_4, a_6 \in S_{B}$}.
\]
and took $B = 2$.  For each non-zero discriminant $\leq 10000000$, we
kept those of norm conductor $\leq 20000$ to arrive at a list of
elliptic curves of small norm conductor.  This consisted of 26445
curves lying in 1518 isomorphism classes.  Of course with such a
naive method we have no way of knowing whether or not we have found
all isomorphism---or even isogeny---classes of curves up to some
bound.  Nevertheless, this enables us to obtain a list of curves of
various conductors, which must then be reconciled with the cohomology
data.

Finally we compare the two sides.  In every case, we found perfect
agreement:
\begin{enumerate}
\item In all cases where an eigenspace was one-dimensional with
rational Hecke eigenvalues, we found an elliptic curve over $F$ whose point counts over
finite fields agreed with our Hecke data, at least within the range
where we were able to compute both sides.  There were $44$ levels
where this occurred; the curves are given in Table \ref{tab:ec}.
Generators for the conductors of these curves are given in Table
\ref{tab:cond}. 
\item For no level/conductor of norm $\leq 835$ did we find a curve
over $F$ that was not accounted for by a Hecke eigenclass, or a Hecke
eigenclass that could not be matched to a curve.
\item In $10$ levels we encountered a two-dimensional eigenspace on
which the Hecke operators acted by rational scalar matrices.  These
were ``old'' cohomology classes and can be accounted for by cohomology
classes at lower levels.  Table \ref{tab:old} records the data.
\item At norm level $529 = 23^{2}$, we found a two-dimensional
cuspidal subspace where the eigenvalues live in $\QQ (\sqrt{5})$.
The eigenvalues of this cohomology class match those of the weight two newform
of level $23$.  

\end{enumerate}

\bibliographystyle{amsplain_initials} 
\bibliography{complexcubic}    

\newpage

\begin{table}[htb]
\begin{center}
  \begin{tabular}{|c|c|c|c|c|c|c|}
\hline
Factorization of $\fn$ & $\fp$ & $\fp^2$ & $\fp\fq$ & $\fp^3$ & $
\fp\fq^2$ & $\fp \fq \fr$\\
\hline
$\dim H^4_{\Eis}(\Gamma_0(\fn))$ &\fatten{3} & \fatten{5} & \fatten{7} & \fatten{7} & \fatten{11} & \fatten{15}\\
\hline
  \end{tabular}
\medskip
\caption{Expected dimension of Eisenstein cohomology
  $H^4_{\Eis}(\Gamma_0(\fn))$ in terms of the prime factorization of
  $\fn$. Prime ideals are denoted by $\fp, \fq, \fr$.\label{tab:eisenstein}}
\end{center}
\end{table}

\begin{table}[htb]
\begin{tabular}{|c|ccccc|}
\hline
$\Norm(\fn)$&$a_1$&$a_2$&$a_3$&$a_4$&$a_6$\cr
\hline
89 & $t-1$&$-t^2-1$&$t^2-t$&$t^2$&$0$\cr
107 & $0$&$-t$&$-t-1$&$-t^2-t$&$0$\cr
115 & $-t^2+t-1$&$-t^2+1$&$t-1$&$-1$&$-t^2$\cr
136 & $-t^2$&$-1$&$-t^2+1$&$t+1$&$0$\cr
161 & $t^2-t-1$&$-t^2+t-1$&$t^2-t+1$&$t^2-t$&$t-1$\cr
167 & $t^2+1$&$t+1$&$t^2+t-1$&$-t^2-t+1$&$-t^2+t+1$\cr
185 & $t$&$-t^2+t+1$&$t+1$&$0$&$0$\cr
223 & $1$&$t^2$&$t^2+t-1$&$-t^2+t-1$&$1$\cr
253 & $-1$&$-t^2-t$&$-t^2-t$&$-t^2-t$&$0$\cr
259 & $0$&$1$&$-t^2-t-1$&$t^2-t+1$&$-t^2-t+1$\cr
275 & $-t^2+t$&$t$&$t^2-t$&$0$&$0$\cr
289 & $-1$&$t^2-t$&$t$&$1$&$0$\cr
293 & $t^2-1$&$-t+1$&$t^2-t+1$&$0$&$0$\cr
344 & $t-1$&$-t^2-t$&$-t^2+t+1$&$t^2-1$&$0$\cr
359 & $-t^2+1$&$t+1$&$t^2+1$&$t^2-t$&$-t+1$\cr
385 & $-t^2$&$-t^2-t-1$&$-t^2-1$&$t^2+t$&$-t^2+1$\cr
392 & $-t^2+1$&$-t^2+t+1$&$-t+1$&$t^2-1$&$t$\cr
440 & $-t^2+1$&$-t^2-t+1$&$-t^2$&$-t$&$0$\cr
449 & $-t^2$&$1$&$-t^2+t+1$&$t+1$&$0$\cr
475 & $0$&$-t$&$t^2+t$&$t^2-t+1$&$-t^2+1$\cr
503 & $-t^2+t$&$-t^2+t+1$&$-t^2+t$&$-t^2+t$&$0$\cr
505 & $t^2-t$&$t^2-t+1$&$t^2+t$&$t^2-t+1$&$-t^2+1$\cr
505 & $t^2-t-1$&$t^2-1$&$0$&$t-1$&$0$\cr
512 & $0$&$t^2+1$&$0$&$t^2$&$0$\cr
553 & $t$&$1$&$t$&$0$&$0$\cr
593 & $t$&$-t-1$&$t^2$&$-t^2+t+1$&$0$\cr
595 & $-t^2+t-1$&$-t^2+t+1$&$-t^2+t+1$&$-t^2+t+1$&$0$\cr
625 & $t$&$-t-1$&$t^2+1$&$1$&$-t^2$\cr
649 & $-t^2-t-1$&$-t$&$0$&$-t^2+t-1$&$0$\cr
665 & $-t$&$-t^2+1$&$-t^2+t$&$-t^2+t$&$0$\cr
685 & $t^2-1$&$-t^2+t$&$-t^2+1$&$-t-1$&$t^2$\cr
712 & $2t^2-t-1$&$-t^2-2t+2$&$t+2$&$2t^2+2t$&$-2t^2-t$\cr
719 & $-t^2+t$&$-t^2+t-1$&$-1$&$0$&$0$\cr
719 & $t^2-t-1$&$-t^2+t-1$&$0$&$t^2-t$&$0$\cr
721 & $-t^2+t+1$&$-t^2+t-1$&$t^2+1$&$-t-1$&$-t+1$\cr
727 & $1$&$t^2+t-1$&$t^2-t$&$-1$&$0$\cr
773 & $2t^2-1$&$2t+1$&$2t^2+2t$&$-t^2+2t+1$&$-2t^2-t$\cr
805 & $-t^2-2t+2$&$-2t^2+2t$&$t^2-t-1$&$-2t^2+t+2$&$-2t^2-t$\cr
808 & $-t-2$&$0$&$2t^2-t-2$&$-2t^2+2t+2$&$-2t^2-t$\cr
809 & $-t^2+t-1$&$t^2-1$&$t^2+1$&$t^2-t$&$-t^2$\cr
809 & $t^2-1$&$t^2+t-1$&$t^2-t$&$t^2-t$&$0$\cr
817 & $-t^2+t$&$-t^2$&$t^2-t+1$&$-1$&$0$\cr
829 & $0$&$-t^2$&$t^2-t-1$&$0$&$0$\cr
\hline
\end{tabular}
\medskip
\caption{Equations for elliptic curves over $F$\label{tab:ec}. Here $t$ is a root of $x^3-x^2+1$.}
\end{table}

\begin{table}[htb]
\begin{center}
\begin{tabular}{|l|c|l|c|l|c|l|c|}
\hline
N($\fn$) & generator&N($\fn$) & generator&N($\fn$) & generator&N($\fn$) & generator\cr
\hline
89 & $4t^2 - t - 5$& 107 & $-5t^2 + 3t$& 115 & $-2t^2 - 2t - 3$& 136 & $6t^2 - 2t - 2$\cr
161 & $-5t^2 + 5t + 4$& 167 & $-5t^2 + 3t - 3$& 185 & $-t^2 - 5t + 4$& 223 & $6t^2 - 5t - 2$\cr
253 & $7t^2 - 5t - 5$& 259 & $4t^2 - 7t - 1$& 275 & $8t^2 - 2t - 3$& 289 & $3t^2 - 7t - 2$\cr
293 & $-5t^2 - 2t - 2$& 344 & $6t^2 - 2t - 8$& 359 & $7t^2 - 6t - 2$& 385 & $-6t^2 + 7t + 5$\cr
392 & $-8t^2 + 6t + 6$& 440 & $8t^2 + 2t - 6$& 449 & $t^2 - 8t$& 475 & $-4t^2 - 7t$\cr
503 & $t^2 - t - 8$& 505 & $-2t^2 - 7t + 2$& 505 & $-8t + 1$& 512 & $8$\cr
553 & $9t^2 - 4t - 2$& 593 & $8t^2 - t - 9$& 595 & $11t^2 - 4t - 6$& 625 & $8t^2 + 3t + 1$\cr
649 & $8t^2 + t - 8$& 665 & $9t^2 + t - 8$& 685 & $-7t^2 + 5t - 7$& 712 & $6t^2 - 10t - 8$\cr
719 & $t^2 - t - 9$& 719 & $11t^2 - 4t - 5$& 721 & $8t^2 - 9$& 727 & $10t^2 - 7t - 7$\cr
773 & $-3t^2 + 12t - 5$& 805 & $3t^2 - 6t - 10$& 808 & $-6t^2 - 2t - 4$& 809 & $9t^2 - 9t - 1$\cr
817 & $-t^2 - 7t - 8$& 829 & $6t^2 - t - 10$& &&&\cr
\hline
\end{tabular}
\end{center}
\medskip
    \caption{Generators for ideals arising as conductors of elliptic curves.\label{tab:cond} Here $t$ is a root of $x^3-x^2+1$.  The two curves labelled 809 in Table \ref{tab:ec} share the level of norm 809 in this table.  The curves labelled 505 and 719 have different conductors, as indicated by the levels in this table.}
\end{table}

\begin{table}[htb]
\begin{center}
\begin{tabular}{|l|c|c|}
\hline
N($\fn$) & generator& Norm of original level\cr
\hline
445 & $2t^2 - 5t - 8$  & 89 \cr
535 & $-7t^2 + 8t + 2$  & 107\cr
575 & $-9t^2 + t + 9$  & 115\cr
623 & $10t^2 + 2t - 7$  & 89\cr
680 & $-6t - 8$   & 136\cr
712 & $6t^2 - 10t - 8$  & 89\cr
749 & $-8t^2 + t - 2$  & 107\cr
805 & $-t^2 + 10t + 4$  &  161\cr
805 & $3t^2 - 6t - 10$  & 115\cr
835 & $-10t^2 + 8t - 1$  & 167\cr
\hline
\end{tabular}
\end{center}
\medskip \caption{``Old'' cohomology classes.  In every instance the
eigenspace was two-dimensional, with the Hecke operators acting by
scalar matrices.\label{tab:old} The eigenvalues originally occur at
the levels in the third column.  Here $t$ is a root of $x^3-x^2+1$.}
\end{table}

\end{document}